\newtheorem{thm}{Theorem}[section]
\newtheorem{lem}[thm]{Lemma}
\newtheorem{cor}[thm]{Corollary}
\newtheorem{prop}[thm]{Proposition}
\def\square{\vbox{
      \hrule height 0.4pt
      \hbox{\vrule width 0.4pt height 5.5pt \kern 5.5pt \vrule width 0.4pt}
      \hrule height 0.4pt}}
\def\id{\mathrm{id}}
\def\Ker{\mathrm{K er}}
\def\ch\mathrm{c h}
\def\RP{\mathbb{R}\mathrm{P}}
\long\def\symbolfootnote[#1]#2{\begingroup%
\def\thefootnote{\fnsymbol{footnote}}\footnote[#1]{#2}\endgroup}
\newcommand{\Z}{\mathbb{Z}}
\newcommand{\Brun}{\mathrm{Brun}}
\numberwithin{equation}{section}
\newcommand{\auths}[1]{\textrm{#1},}
\newcommand{\artTitle}[1]{\textsl{#1},}
\newcommand{\jTitle}[1]{\textrm{#1}}
\newcommand{\Vol}[1]{\textbf{#1}}
\newcommand{\Year}[1]{\textrm{(#1)}}
\newcommand{\Pages}[1]{\textrm{#1}}
\title{On Cohen  braids }
\author{V. G. Bardakov}
\address{Sobolev Institute of Mathematics, Novosibirsk 630090, Russia}
\email{bardakov@math.nsc.ru}
\author{V. V. Vershinin}
\address{D\'epartement des Sciences Math\'ematiques,
                               Universit\'e Montpellier II,
Place Eug\`ene Bataillon,
34095 Montpellier cedex 5, France} \email{ vershini@math.univ-montp2.fr}
\address{Sobolev Institute of Mathematics, Novosibirsk 630090,
Russia }
\email{ versh@math.nsc.ru}
\author{J. Wu}
\address{Department of Mathematics, National University of Singapore, 2 Science Drive 2
Singapore 117542} \email{matwuj@nus.edu.sg}
\urladdr{www.math.nus.edu.sg/\~{}matwujie}
\subjclass[2000]{Primary 57M; Secondary 55, 20E99}
\keywords{Brunnian braid, surface, generating set
}
\begin{document}

\begin{abstract}
For a general surface $M$ and  an arbitrary braid $\alpha$ from the surface braid group $B_{n-1}(M)$
we study the system of equations $
d_1\beta=\cdots=d_{n}\beta=\alpha,
$ where the operation $d_i$ is deleting of $i$-th strand.             
We obtain that if $M\not=S^2$ or $\RP^2$ this system of equations has a solution 
$\beta\in B_{n}(M)$ if and only if
$
d_1\alpha=\ldots=d_n\alpha.
$
The set of braids satisfying the last system of equations we call {\it Cohen} braids. We also construct a set of generators for the groups of Cohen braids.
In the cases of the sphere and the projective plane we give some examples for the  small number of strands.
\end{abstract}

\maketitle

\tableofcontents

\section{Introduction}
Let $M$ be a general 
connected surface, possibly with boundary components (we can consider $M$ as a compact surface with a finite number of punctures). We denote by $B_n(M)$ the $n$-strand braid group on a surface $M$. 
In the work \cite{BMVW} \textit{Brunnian} braids on the surface $M$ were studied.
Brunnian
means a braid that becomes trivial after
removing any one of its strands.
 See the formal definition  in \cite{BMVW}. Here we mention that it is done with the help of the operations
$$d_i\colon B_n(M)\to B_{n-1}(M)$$
which 
 are obtained (roughly speaking) by forgetting the $i$-th strand, $1\leq i\leq n$.
We can interpret a Brunnian  braid $\beta\in B_n(M)$ as a solution of the system of $n$
equations 
\begin{equation*}
\begin{cases}
 d_1(\beta)=1, \\
\dots\\
 d_n(\beta)=1.
 \end{cases}
\end{equation*}
Let $\Brun_n(M)$ denote the set of the $n$-strand Brunnian braids. Then $\Brun_n(M)$ forms 
a subgroup of $B_n(M)$.

In the present paper we replace the unit element of the group  by an arbitrary braid $\alpha\in B_{n-1}(M)$ and we  
ask the following question: does there exist a braid $\beta\in B_n(M)$ such that it is a solution of the following system of equations
\begin{equation}
\begin{cases}
 d_1(\beta)=\alpha, \\
\dots\\
 d_n(\beta)=\alpha.
 \end{cases}\label{eq:sys2}
\end{equation}   
Apart from Brunnian braids the following example can be given. Let $\alpha$ be the Garside element
$\Delta_{n-1}\in B_{n-1}(M)$. Then $\Delta_n\in B_n(M)$ is a solution of system (\ref{eq:sys2}).

The main result of the paper is the following.
\begin{thm}\label{main}
Let $M$ be any connected $2$-manifold such that $M\not=S^2$ or $\RP^2$ and let $\alpha\in B_{n-1}(M)$. Then the system of equations (\ref{eq:sys2})
for $n$-strand braids $\beta$ has a solution if and only if $\alpha$ satisfies the condition that
$$
d_1\alpha=\ldots=d_{n-1}\alpha.
$$
\end{thm}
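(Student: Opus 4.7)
The \emph{necessity} direction follows from the simplicial identities $d_id_j=d_{j-1}d_i$ ($i<j$) satisfied by the deletion operators: if $d_j\beta=\alpha$ for every $j$, then for each $k<n$,
\[
d_k\alpha = d_k d_n\beta = d_{n-1}d_k\beta = d_{n-1}\alpha,
\]
so $d_1\alpha=\cdots=d_{n-1}\alpha$.

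For \emph{sufficiency}, the plan is a descending induction that fixes the faces $d_n,d_{n-1},\dots,d_1$ one at a time. Surjectivity of $d_n\colon B_n(M)\to B_{n-1}(M)$ (add an unknotted strand in general position) produces some $\beta^{(n)}$ with $d_n\beta^{(n)}=\alpha$. Given $\beta^{(k+1)}$ satisfying $d_j\beta^{(k+1)}=\alpha$ for all $j\ge k+1$, form the discrepancy $\delta_k:=(d_k\beta^{(k+1)})^{-1}\alpha$ and seek $\gamma\in K:=\bigcap_{j\ge k+1}\mathrm{Ker}\,d_j$ with $d_k\gamma=\delta_k$; then $\beta^{(k)}:=\beta^{(k+1)}\gamma$ advances the induction. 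Using the simplicial identities in the form $d_ld_k=d_kd_{l+1}$ (for $l>k$) and $d_kd_k=d_kd_{k+1}$, together with the hypothesis $d_1\alpha=\cdots=d_{n-1}\alpha$, one verifies that $\delta_k\in K':=\bigcap_{l\ge k}\mathrm{Ker}\,d_l\subseteq B_{n-1}(M)$; the same identities force $d_k(K)\subseteq K'$, so the whole problem reduces to showing the restricted map $d_k\colon K\to K'$ is surjective.

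The main obstacle is exactly this restricted surjectivity. My plan is to exploit the Fadell--Neuwirth fibration
\[
M\setminus\{n-1\text{ points}\}\;\hookrightarrow\; F(M,n)\;\longrightarrow\; F(M,n-1),
\]
which for $M\ne S^2,\RP^2$ assembles into a tower of $K(\pi,1)$-fibrations and thus yields short exact sequences of (pure) braid groups free of $\pi_2$-obstructions. Iterating gives explicit generating sets of $K$ and $K'$ as loops around successive punctures, and a strand-insertion argument exhibits, for each generator of $K'$, a generator of $K$ hitting it under $d_k$. This explicit generator-level lifting---parallel to the Brunnian generator computation in \cite{BMVW}---is the technical heart of the argument, and the exclusion of $S^2$ and $\RP^2$ is exactly what keeps the Fadell--Neuwirth tower purely a tower of group extensions, avoiding the $\pi_2$ anomalies that appear in the excluded cases and motivate the separate treatment of those surfaces later in the paper.
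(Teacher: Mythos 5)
Your necessity argument is correct and is the same as the paper's (Proposition~\ref{proposition6.2} is exactly the statement that the identities $d_jd_i=d_id_{j+1}$, $j\geq i$, force $d_i\beta=d_j\beta$ to imply $d_id_1\beta=d_jd_1\beta$). Your sufficiency scheme --- correct the faces one at a time, reducing to surjectivity of $d_k\colon K=\bigcap_{j\geq k+1}\Ker d_j\to K'=\bigcap_{l\geq k}\Ker d_l$ --- is a legitimate unwinding of the bi-$\Delta$-group argument, and for surfaces with nonempty boundary that surjectivity has a one-line proof you did not quite write down: the coface $d^k$ (insert a trivial strand at position $k$ near the boundary) satisfies $d_kd^k=\id$ and $d_jd^k=d^kd_{j-1}$ for $j>k$, so $d^k(\delta)\in K$ and $d_kd^k(\delta)=\delta$. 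That is essentially what the paper imports from \cite{Wu4}. However, there are two genuine gaps. First, the face maps are \emph{not} homomorphisms on $B_n(M)$: as in equation~(\ref{equation6.2}), $d_i(\beta\gamma)=d_i(\beta)d_{i\cdot\beta}(\gamma)$, where $i\cdot\beta$ is twisted by the underlying permutation. Your bookkeeping ($d_j(\beta^{(k+1)}\gamma)=d_j(\beta^{(k+1)})d_j(\gamma)$, $d_l\delta_k=(d_ld_k\beta^{(k+1)})^{-1}d_l\alpha$) silently assumes multiplicativity, which fails unless everything is pure. The paper deals with this by first reducing to $\mathfrak{H}_n(M)=\mathfrak{H}^B_n(M)\cap P_n(M)$, which has index $2$ (Proposition~\ref{proposition6.3}), and then absorbing the nontrivial coset with the half-twist via $d_i\Delta_n=\Delta_{n-1}$. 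This is fixable but must be done.

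The second gap is the serious one: for \emph{closed} $M$ there is no strand-insertion map, and the vanishing of $\pi_2$ in the Fadell--Neuwirth tower only gives surjectivity of each individual $d_j\colon P_n(M)\to P_{n-1}(M)$; it does not give surjectivity of $d_k$ \emph{restricted} to $\bigcap_{j\geq k+1}\Ker d_j$. Concretely, for $k=1$ you need: every $w\in\Brun_{n-1}(M)$ lifts to some $\gamma$ killed by $d_2,\ldots,d_n$, i.e.\ the coset $d_1^{-1}(w)$ of $\Ker d_1\cong\pi_1(M\smallsetminus\{n-1\ \text{points}\})$ meets $\bigcap_{j\geq 2}\Ker d_j$ --- a nontrivial lifting problem with no coface to solve it. This is precisely where the paper invokes Theorem~1.1 of \cite{BMVW} (surjectivity of $\Brun_k(M\smallsetminus\{q_1\})\to\Brun_k(M)$ for $M\neq S^2,\RP^2$) and runs an induction transferring the epimorphism from the punctured surface, where cofaces do exist, to the closed one. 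Your phrase ``a strand-insertion argument exhibits, for each generator of $K'$, a generator of $K$ hitting it'' describes the bounded case only; the failure of exactly this step for $S^2$ and $\RP^2$ (where the analogous comparison map has cokernel measured by homotopy groups of spheres, as in \cite{BCWW}) shows the surjectivity cannot be a formal consequence of the fibration tower. As written, the technical heart of the sufficiency direction for closed surfaces is asserted rather than proved.
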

The technique of the proof is based on the bi-$\Delta$-group structure on the pure braid groups of connected $2$-manifolds with nonempty boundary~\cite{Wu4} as well as the determination of Brunnian braids on general connected $2$-manifolds~\cite{BMVW}. Our technology could not apply for the exceptional cases $M=S^2$ or $\RP^2$, and so it remains open whether system~(\ref{eq:sys2}) has a solution for any $\alpha$ satisfying the condition in the theorem.

The paper is organized as follows. In section~2 we remind bi-$\Delta$-structures,
 define Cohen braids and prove the main theorem. In section 3 we study Cohen braids of
  the disc with  small number of strands and construct a generating set for Cohen braids of the disc for an arbitrary number of strands. In section~4 we study Cohen braids of the sphere and
of the projective plane and a map from the group of Cohen braids of the disc to the
group of Cohen braids of the sphere. In section~5 we propose generalizations of Cohen
braids   and in section~6 we discuss some open questions.
     
\section{Bi-$\Delta$-structure, Cohen groups and Brunnian braids}
Let $M$ be a connected manifold with $\partial M\not=\emptyset$. Let $a$ be a point in a collar of $\partial M$
$$
\partial M\times [0,1)\subseteq M.
$$
Then the map
$$
\begin{array}{c}
F(M,n)\simeq F(M\smallsetminus \partial M\times [0,1),n)\rTo F(M,n+1),\\
(z_1,\ldots,z_n)\mapsto (z_1,\ldots,z_{i-1},a,z_{i+1},\ldots,z_n)\\
\end{array}
$$
induces a group homomorphism
$$
d^i\colon B_n(M)=\pi_1(F(M,n)/\Sigma_n)\rTo B_{n+1}(M)=\pi_1(F(M,n+1)/\Sigma_{n+1})
$$
for $1\leq i\leq n+1$. Intuitively, $d^i$ is given by adding a trivial strand in position $i$. According to~\cite[Example 1.2.8]{Wu4}, the sequence of groups $\{B_{n+1}(M)\}_{n\geq 0}$ with faces relabeled as $\{d_0,d_1,\ldots\}$ and cofaces relabeled as $\{d^0,d^1,\ldots\}$ forms a bi-$\Delta$-set structure. Namely the following identities hold:
\begin{enumerate}
\item $d_jd_i=d_id_{j+1}$ for $j\geq i$;
\item $d^jd^i=d^{i+1}d^j$ for $j\leq i$;
\item $d_jd^i=\left\{
\begin{array}{lcl}
d^{i-1}d_j&\textrm{ if }& j<i,\\
\id&\textrm{ if }& j=i,\\
d^id_{j-1}&\textrm{ if }& j>i.\\
\end{array}
\right.$
\end{enumerate}
Let us  restrict to the pure braid groups, then the sequence of these  groups $\{P_{n+1}(M)\}_{n\geq0}$ is a bi-$\Delta$-group. According to~\cite[Proposition 1.2.9]{Wu4}, we have the following decomposition.
\begin{prop}\label{proposition6.1}
Let $M$ be a connected $2$-manifold with nonempty boundary. Then $P_n(M)$ is the (iterated) semi-direct product of the subgroups
$$
d^{i_k}d^{i_{k-1}}\cdots d^{i_1}(\Brun_{n-k}(M)),
$$
$1\leq i_1<i_2<\cdots<i_k\leq n$, $0\leq k\leq n-1$, with lexicographical
order from the right.\hfill $\Box$
\end{prop}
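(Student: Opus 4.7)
The plan is to prove this by induction on $n$. The base case $n=1$ is immediate: $P_1(M)=\pi_1(M)=\Brun_1(M)$ and the only allowed index sequence is the empty one ($k=0$).

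For the inductive step, I split off the factors whose leading index $i_1$ equals $1$. Because $M$ has nonempty boundary, the basepoint $a$ lies in a collar of $\partial M$, and by bi-$\Delta$ identity (3) with $j=i=1$, the coface $d^1$ is a section of the face $d_1\colon P_n(M)\to P_{n-1}(M)$. This yields the split short exact sequence
\[
1\longrightarrow \ker(d_1)\longrightarrow P_n(M)\longrightarrow P_{n-1}(M)\longrightarrow 1,
\]
and hence $P_n(M)=\ker(d_1)\rtimes d^1(P_{n-1}(M))$. Applying the inductive hypothesis to $P_{n-1}(M)$ and pushing each factor $d^{j_l}\cdots d^{j_1}(\Brun_{n-1-l}(M))$ forward through $d^1$, I use the bi-$\Delta$ identity $d^1 d^j=d^{j+1}d^1$ (identity (2) with the smaller index on the left) repeatedly to commute $d^1$ past every other coface. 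The result is $d^{j_l+1}\cdots d^{j_1+1}d^1(\Brun_{n-1-l}(M))$, which is precisely the factor of $P_n(M)$ indexed by $(1<j_1+1<\cdots<j_l+1)$, and so this accounts for all index sequences with $i_1=1$. Each coface is injective (having $d_i$ as a retraction), so these images are genuine subgroups, and the lex-from-the-right order restricted to sequences with $i_1=1$ matches the order inherited from the induction on $P_{n-1}(M)$.

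It remains to decompose $\ker(d_1)$ as the iterated semi-direct product of the factors indexed by $i_1\ge 2$. Identity (3) gives $d_1 d^{i_k}\cdots d^{i_1}=d^{i_k-1}\cdots d^{i_1-1}d_1$ whenever $i_1\ge 2$, and this vanishes on $\Brun_{n-k}(M)$, so each such factor does lie in $\ker(d_1)$. For the reverse inclusion and the iterated semi-direct structure, one identifies $\ker(d_1)$, via the Fadell--Neuwirth fibration that forgets the first strand, with the fundamental group of $M$ minus $n-1$ points (sitting inside a pure braid group of a suitably punctured surface), and then, after the re-indexing $i_s\mapsto i_s-1$, reduces to the inductive hypothesis on a smaller strand count. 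The main obstacle will be ensuring that the lex-from-the-right assembly of factors is compatible with the semi-direct product at every stage: at each step the subgroup already built must be normalized by the next factor added. This compatibility is enforced by the bi-$\Delta$ face--coface interaction in identity (3), which controls how faces detect the coface images and hence how each partial product sits inside $P_n(M)$.
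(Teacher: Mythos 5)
Note first that the paper does not actually prove this proposition: it is imported verbatim from \cite[Proposition 1.2.9]{Wu4}, so your attempt has to be measured against the general bi-$\Delta$-group argument given there. Your first step is sound and is indeed how that argument begins: $d_1d^1=\id$ gives $P_n(M)=\ker(d_1)\rtimes d^1(P_{n-1}(M))$, the coface $d^1$ is injective, and the commutation $d^1d^{j}=d^{j+1}d^1$ correctly matches $d^1$ applied to each inductive factor of $P_{n-1}(M)$ with the factor of $P_n(M)$ whose index sequence has $i_1=1$. Likewise your verification that the factors with $i_1\geq 2$ lie in $\ker(d_1)$ is correct.

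The gap is in the second half, which is where all the real content sits. Your inductive hypothesis is a statement about the groups $P_m(M)$ for $m<n$, but $\ker(d_1\colon P_n(M)\to P_{n-1}(M))$ is not a pure braid group of $M$ on fewer strands --- by Fadell--Neuwirth it is the free group $\pi_1(M\smallsetminus\{n-1 \mbox{ points}\})$ --- so ``reduces to the inductive hypothesis on a smaller strand count'' does not parse: knowing that this kernel is free says nothing about its decomposition into the specific subgroups $d^{i_k}\cdots d^{i_1}(\Brun_{n-k}(M))$. Moreover the bookkeeping is off by one: the factors you must produce inside $\ker(d_1)$ involve $\Brun_{n-k}(M)$ for an index set of size $k$, whereas after your re-indexing $i_s\mapsto i_s-1$ the factors of $P_{n-1}(M)$ supplied by the inductive hypothesis involve $\Brun_{n-1-k}(M)$ for an index set of size $k$; these cannot be identified. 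The repair, and what \cite{Wu4} actually does, is to strengthen the induction to all bi-$\Delta$-groups and induct on the degree: identities (1) and (3) with $j=1$ show that $d_2,\ldots,d_n$ and $d^2,\ldots,d^n$ restrict to $\{\ker(d_1\colon P_{m+1}(M)\to P_m(M))\}_m$, making it a bi-$\Delta$-group of one lower degree whose Brunnian part in degree $m$ is exactly $\Brun_{m+1}(M)$; applying the strengthened hypothesis to this auxiliary bi-$\Delta$-group produces precisely the $i_1\geq 2$ factors (equivalently, one runs the double induction $Z_{j-1}=Z_j\rtimes d^j(\,\cdot\,)$ over $Z_j=\bigcap_{i\leq j}\ker d_i$). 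Finally, your claim that the lexicographic-from-the-right order places all $i_1\geq 2$ factors before the $i_1=1$ block --- which the splitting $\ker(d_1)\rtimes d^1(P_{n-1}(M))$ requires --- is asserted but never checked; the closing sentence about identity (3) ``enforcing'' compatibility is a statement of intent, not an argument.
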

The braids in the subgroup $d^{i_k}d^{i_{k-1}}\cdots d^{i_i}(\Brun_{n-k}(M))$ can be described as $(n-k)$-strand Brunnian braids with $k$ dots (or straight lines) in positions $i_1, \ldots, i_k$. This 
is connected with the inverse braid monoids~\cite{Vershinin}.

We return to the case when $M$ is an arbitrary connected $2$-manifold. Let us define 
a set
$$
\mathfrak{H}^B_n(M)=\{\beta\in B_n(M)\ | \ d_1\beta=d_2\beta=\cdots=d_n\beta\}.
$$
Namely, $\mathfrak{H}^B_n(M)$ consists of $n$-strand braids such that it stays the same braid after removing any one of its strands. For the general $\Delta$-group it  was called {\it Cohen set} in \cite{Wu4}. This is based on Fred Cohen constructions in homotopy theory \cite{C1, C2}.
So, we propose to call this {\it Cohen braids}. We denote Cohen braids for the disc simply by
$
\mathfrak{H}^B_n$ as well as Brunnian braids of the disc by $\Brun_n$. 

A typical element in $\mathfrak{H}^B_n(M)$ is the half-twist braid, which can be expressed, for example, by the formula
$$
\Delta_n=(\sigma_1\sigma_2\cdots\sigma_{n-1})(\sigma_1\sigma_2\cdots\sigma_{n-2})\cdots (\sigma_1\sigma_2)\sigma_1.
$$
\begin{prop}\label{proposition6.2}
Let $M$ be any connected $2$-manifold. Then the set
$\mathfrak{H}^B_n(M)$ is a subgroup of $B_n(M)$. Moreover
$d_i(\mathfrak{H}^B_n(M))\subseteq \mathfrak{H}^B_{n-1}(M)$ and the map
$$
d_1=d_2=\cdots=d_n\colon \mathfrak{H}_n^B(M)\to \mathfrak{H}_{n-1}^B(M)
$$
is a group homomorphism.
\end{prop}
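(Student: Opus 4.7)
My plan is to reduce all three assertions in the proposition to two ingredients that are already in hand: first, that each $d_i\colon B_n(M)\to B_{n-1}(M)$ is a group homomorphism (built into the definition, as it comes from the configuration-space map that forgets the $i$-th coordinate), and second, the simplicial identities (1)--(3) of the bi-$\Delta$-structure listed just above. The subgroup claim is then immediate: since each $d_i$ is a homomorphism, if $d_i\beta$ and $d_i\gamma$ are each independent of $i$, then so are $d_i(\beta\gamma)=d_i(\beta)\cdot d_i(\gamma)$ and $d_i(\beta^{-1})=d_i(\beta)^{-1}$, and the identity braid trivially satisfies the Cohen condition.

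The heart of the argument is the inclusion $d_i(\mathfrak{H}^B_n(M))\subseteq \mathfrak{H}^B_{n-1}(M)$. I would proceed in two steps. First, I would observe that the common value $\alpha := d_1\beta=\cdots=d_n\beta$ already lies in $\mathfrak{H}^B_{n-1}(M)$: applying identity~(1) in the form $d_j d_1 = d_1 d_{j+1}$ (valid for $j\geq 1$) to $\beta$ gives $d_j\alpha = d_1 d_{j+1}\beta = d_1\alpha$, so all faces of $\alpha$ agree. Second, to verify that $d_i\beta\in \mathfrak{H}^B_{n-1}(M)$ for arbitrary $i$, I would split on whether $j\geq i$ or $j<i$ and use identity~(1) to rewrite $d_j d_i\beta$ as $d_i d_{j+1}\beta = d_i\alpha$ in the first case and as $d_{i-1} d_j\beta = d_{i-1}\alpha$ in the second. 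Since $\alpha$ is Cohen, $d_i\alpha$ and $d_{i-1}\alpha$ are equal (whenever both indices are admissible), so $d_j(d_i\beta)$ is independent of $j$.

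Given the first two parts, the third is essentially a tautology: the restriction of any group homomorphism to a subgroup with image in a subgroup is a group homomorphism, and by the very definition of $\mathfrak{H}^B_n(M)$ all $n$ restrictions $d_i|_{\mathfrak{H}^B_n(M)}$ coincide and therefore define a single homomorphism $\mathfrak{H}^B_n(M)\to \mathfrak{H}^B_{n-1}(M)$. I do not expect any genuine obstacle; the only point requiring attention is the admissibility of indices in the two cases of the face identity (for instance $i=1$ collapses the $j<i$ case and $i=n$ collapses the $j\geq i$ case), and in each such corner case the surviving single computation already yields the desired independence on $j$.
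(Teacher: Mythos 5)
The central premise of your first and third steps --- that each $d_i\colon B_n(M)\to B_{n-1}(M)$ is a group homomorphism ``built into the definition'' --- is false, and this is a genuine gap. The coordinate-forgetting map lives on the ordered configuration space $F(M,n)$ and induces homomorphisms only on the \emph{pure} braid groups $P_n(M)\to P_{n-1}(M)$; it does not descend to a map of unordered configuration spaces, and strand deletion on the full group $B_n(M)$ is merely a set map satisfying the twisted rule
$$
d_i(\beta\gamma)=d_i(\beta)\,d_{i\cdot\beta}(\gamma),
$$
where $i\cdot\beta$ is the image of $i$ under the permutation underlying $\beta$. A concrete failure already appears in the paper, in the proof of Proposition~\ref{norm}: in $B_3$ one has $d_3(\sigma_2\sigma_1^2)=e$, whereas $d_3(\sigma_2)\,d_3(\sigma_1^2)=\sigma_1^2$. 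Since $\mathfrak{H}^B_n(M)$ sits inside the full braid group (it contains the non-pure braid $\Delta_n$), your derivation of $d_i(\beta\gamma)=d_i(\beta)\,d_i(\gamma)$ and $d_i(\beta^{-1})=d_i(\beta)^{-1}$ --- and with it both the subgroup claim and the final homomorphism claim --- does not go through as written.

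The repair is precisely the content of the paper's proof: for $\beta,\gamma\in\mathfrak{H}^B_n(M)$ the twist is harmless because all faces of $\gamma$ coincide, so $d_i(\beta\gamma)=d_i(\beta)\,d_{i\cdot\beta}(\gamma)=d_i(\beta)\,d_1(\gamma)=d_i(\beta)\,d_i(\gamma)$, and similarly $1=d_i(\beta^{-1}\beta)=d_i(\beta^{-1})\,d_{i\cdot\beta^{-1}}(\beta)=d_i(\beta^{-1})\,d_1(\beta)$ gives $d_i(\beta^{-1})=d_1(\beta)^{-1}$. With that substitution your argument becomes the paper's. Your middle step --- establishing $d_i(\mathfrak{H}^B_n(M))\subseteq\mathfrak{H}^B_{n-1}(M)$ via the simplicial identity $d_jd_i=d_id_{j+1}$ --- is correct and coincides with the paper's computation, though your ``second step'' there is redundant: once the common value $\alpha=d_1\beta=\cdots=d_n\beta$ is shown to be Cohen you are done, since $d_i\beta$ \emph{is} $\alpha$ for every $i$.
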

\begin{proof}
Let $\beta,\gamma\in \mathfrak{H}^B_n(M)$. Then
\begin{equation}\label{equation6.2}
d_i(\beta\gamma)=d_i(\beta) d_{i\cdot\beta}(\gamma)=d_1(\beta)d_1(\gamma)
=d_i(\beta)d_i(\gamma)
\end{equation}
for $1\leq i\leq n$. Thus $\beta\gamma\in \mathfrak{H}^B_n(M)$. From
$$
1=d_i(\beta^{-1}\beta)=d_i(\beta^{-1})d_{i\cdot\beta^{-1}}(\beta)=d_i(\beta^{-1})d_1(\beta),
$$
we have
$$
d_i(\beta^{-1})=(d_1(\beta))^{-1}
$$
for $1\leq i\leq n$. Thus $\mathfrak{H}^B_n(M)$ is a subgroup of $B_n(M)$.

As $\beta\in \mathfrak{H}^B_n(M)$,  the identities
$$
d_j(d_i\beta)=d_j(d_1\beta)=d_1(d_{j+1}\beta)=d_1(d_1\beta)
$$
give: $d_i\beta=d_1\beta\in \mathfrak{H}^B_{n-1}(M)$. From equation~(\ref{equation6.2}),
$$
d_1=d_i\colon \mathfrak{H}^B_n(M)\to \mathfrak{H}^B_{n-1}(M)
$$
is a group homomorphism and hence the result.
\end{proof}

\begin{prop}\label{proposition6.3}
Let $M$ be any connected $2$-manifold. Let $n\geq 2$. Then $\mathfrak{H}^B_n(M)\cap P_n(M)$ is a subgroup of $\mathfrak{H}^B_n(M)$ of index $2$.
\end{prop}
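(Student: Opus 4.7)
The plan is to exhibit the index-$2$ quotient explicitly via the permutation homomorphism $\pi_n\colon B_n(M)\to\Sigma_n$, whose kernel is $P_n(M)$. Restricted to $\mathfrak{H}^B_n(M)$, the kernel becomes $\mathfrak{H}^B_n(M)\cap P_n(M)$, so it suffices to show that the image of this restricted homomorphism is a subgroup of $\Sigma_n$ of order exactly~$2$.

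The first step is to note that the face maps on surface braid groups descend along $\pi_n$ to the standard face maps $d_i\colon\Sigma_n\to\Sigma_{n-1}$, since forgetting a strand corresponds to forgetting its endpoint in the permutation. Consequently $\pi_n$ carries $\mathfrak{H}^B_n(M)$ into the analogously defined set of \emph{Cohen permutations} $\mathfrak{H}^\Sigma_n:=\{\sigma\in\Sigma_n\mid d_1\sigma=\cdots=d_n\sigma\}$.

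The core of the argument, which I expect to be the main obstacle, is the combinatorial identification $\mathfrak{H}^\Sigma_n=\{\id,\omega_n\}$, where $\omega_n(k)=n+1-k$ is the order-reversing permutation. For this I would take $\sigma\in\mathfrak{H}^\Sigma_n$, unravel the relabellings in the definitions of $d_i\sigma$ and $d_{i+1}\sigma$, and check by a short case analysis on the sign of $\sigma(i)-\sigma(i+1)$ that equality of these two permutations at position~$i$ forces $|\sigma(i)-\sigma(i+1)|=1$. Once that local constraint is in place, the sequence $(\sigma(1),\ldots,\sigma(n))$ is a walk on $\{1,\ldots,n\}$ with steps of $\pm 1$ that visits every vertex exactly once, hence is strictly monotone, so $\sigma\in\{\id,\omega_n\}$.

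The conclusion is then immediate: $\pi_n\bigl(\mathfrak{H}^B_n(M)\bigr)$ is a subgroup of $\{\id,\omega_n\}$ containing $\pi_n(\Delta_n)=\omega_n$, since the half-twist $\Delta_n$ lies in $\mathfrak{H}^B_n(M)$ (as noted just before the statement). Therefore the image has order exactly~$2$, and the first isomorphism theorem yields the desired index.
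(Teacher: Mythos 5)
Your proposal is correct, and its skeleton is the same as the paper's: both pass to the symmetric group via the permutation homomorphism $q_n\colon B_n(M)\to\Sigma_n$ (whose kernel cuts out $\mathfrak{H}^B_n(M)\cap P_n(M)$), observe that the faces descend to $\Sigma_n$ so that the image lands in the Cohen permutations, and use the half-twist $\Delta_n$ to see that this image is nontrivial. The one place where you genuinely diverge is in bounding the image from above. The paper argues by induction on $n$: it sets up the left exact sequence $1\to\bigcap_{i=1}^n\Ker(d_i^{\Sigma})\to\mathfrak{H}^{\Sigma}_n\to\mathfrak{H}^{\Sigma}_{n-1}$, invokes the ``direct calculation'' that there are no nontrivial Brunnian permutations for $n\geq 3$, and concludes that iterating $d_1^{\Sigma}$ embeds $\mathfrak{H}^{\Sigma}_n$ into $\mathfrak{H}^{\Sigma}_2=\Z/2$. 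You instead classify the Cohen permutations outright as $\{\id,\omega_n\}$ via the local constraint $|\sigma(i)-\sigma(i+1)|=1$ followed by the monotone-walk observation; this constraint does check out (comparing $d_i\sigma$ and $d_{i+1}\sigma$ at position $i$ gives exactly the two cases you describe). Your route buys a completely explicit and self-contained identification of $\mathfrak{H}^{\Sigma}_n$, making concrete the calculation the paper only alludes to, while the paper's inductive route is the one that generalizes directly to the bi-$\Delta$-group formalism it uses elsewhere.
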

\begin{proof}
Consider the short exact sequence of groups
$$
P_n(M)\rInto B_n(M)\rOnto^{q_n}\Sigma_n.
$$
The face function $d_i\colon B_n(M)\to B_{n-1}(M)$ induces a unique face $$d_i^{\Sigma}\colon \Sigma_n\to \Sigma_{n-1}$$ such that
$$
d_i^{\Sigma}\circ q_n=q_{n-1}\circ d_i^B
$$
for each $1\leq i\leq n$. Let $\mathfrak{H}_n^{\Sigma}=q_n(\mathfrak{H}_n^B(M))$. Then there is a left exact sequence
$$
1\rTo \bigcap_{i=1}^n\Ker(d_i^{\Sigma})\rTo \mathfrak{H}^{\Sigma}_n\rTo^{d_1^{\Sigma}}\mathfrak{H}^{\Sigma}_{n-1}.
$$
By direct calculation, $\bigcap_{i=1}^n\Ker(d_i^{\Sigma})=\{1\}$ for $n\geq 3$. Thus
$$
d_1^{\Sigma}\circ \cdots \circ d_1^{\Sigma}\colon \mathfrak{H}^{\Sigma}_n\longrightarrow \mathfrak{H}^{\Sigma}_2=\Z/2
$$
is a monomorphism for $n\geq 3$. Since the half-twist braid $\Delta_n$ has nontrivial image in $\mathfrak{H}^{\Sigma}_n$, we have
$$
\mathfrak{H}^{\Sigma}_n=\Z/2
$$
for $n\geq 2$ and hence the result.
\end{proof}

Let $\mathfrak{H}_n(M)=\mathfrak{H}^B_n(M)\cap P_n(M)$. Then $d_1(\mathfrak{H}_n(M))\subseteq \mathfrak{H}_{n-1}(M)$. This gives a tower of groups
$$
\cdots \rTo^{d_1} \mathfrak{H}_n(M)\rTo^{d_1} \mathfrak{H}_{n-1}(M)\rTo^{d_1}\cdots.
$$
Let $\mathfrak{H}(M)=\lim\limits_n\mathfrak{H}_n(M)$ be the inverse limit of the tower of groups.

\begin{prop}\label{proposition6.4}
Let $M$ be any connected $2$-manifold such that $M\not=S^2$ or $\RP^2$. Then
$$
d_1\colon \mathfrak{H}_n(M)\to\mathfrak{H}_{n-1}(M)
$$
is an epimorphism for each $n\geq 2$.
\end{prop}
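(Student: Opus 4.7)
My plan is to proceed by induction on $n$, using a section of the Fadell--Neuwirth fibration for the base case and a correction argument based on the bi-$\Delta$-identities for the inductive step.

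\textbf{Base case $n=2$.} Because $M\neq S^2,\RP^2$, the surface $M$ carries a nowhere-zero tangent vector field $v$, and so the map $x\mapsto(x,x+\epsilon v(x))$ is a common section of both forgetful projections $F(M,2)\to M$. The induced homomorphism $s\colon\pi_1(M)\to P_2(M)$ satisfies $d_1\circ s=d_2\circ s=\id$. For any $\alpha\in\pi_1(M)=\mathfrak{H}_1(M)$, the braid $\beta=s(\alpha)$ lies in $\mathfrak{H}_2(M)$ and maps to $\alpha$ under $d_1$.

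\textbf{Inductive step.} Given $\alpha\in\mathfrak{H}_{n-1}(M)$, set $\beta_0=d^1\alpha\in P_n(M)$. Using the bi-$\Delta$-identities $d_1d^1=\id$ and $d_id^1=d^1d_{i-1}$ for $i\geq 2$, together with $d_{i-1}\alpha=d_1\alpha$, one finds $d_1\beta_0=\alpha$ and $d_i\beta_0=d^1d_1\alpha$ for $i\geq 2$. The discrepancy $\gamma=\alpha\cdot(d^1d_1\alpha)^{-1}\in P_{n-1}(M)$ then satisfies $d_1\gamma=1$, and a further application of the identities shows $d_i\gamma$ is constant in $i\geq 2$. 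The inductive step reduces to constructing a correcting braid $\delta\in P_n(M)$ with $d_1\delta=1$ and $d_i\delta=\gamma$ for $i\geq 2$; for then $\beta=\delta\beta_0$ satisfies $d_i\beta=\alpha$ for every $i$ (since the $d_i$ are homomorphisms on pure braids), so $\beta\in\mathfrak{H}_n(M)$ and $d_1\beta=\alpha$.

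To build $\delta$, I would use the semidirect-product decomposition of Proposition~\ref{proposition6.1} --- reducing to surfaces with boundary by passing to $M\smallsetminus D^2$ and using the natural surjection $P_n(M\smallsetminus D^2)\twoheadrightarrow P_n(M)$ in case $\partial M=\emptyset$ --- to write $\gamma$ as an iterated product of Brunnian pieces $d^I(\eta_I)$ with $\eta_I\in\Brun_{n-1-|I|}(M)$. The constraints $d_1\gamma=1$ and $d_i\gamma$ constant for $i\geq 2$ force a restricted pattern of which summands appear nontrivially; each such summand is then lifted to $P_n(M)$ by inserting a suitably chosen additional coface, and the assembled product gives $\delta$.

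The main obstacle is the coherence check: one must verify that the individually lifted pieces multiply to a $\delta$ whose face maps equal $\gamma$ exactly, and not merely up to residual Brunnian error. This reduces to iterated manipulation of the bi-$\Delta$-identities and the Brunnian property of the $\eta_I$. The exceptional cases $M=S^2,\RP^2$ lie outside this argument both because they admit no nowhere-zero vector field, breaking the base case, and because the decomposition of Proposition~\ref{proposition6.1} is stated for surfaces with boundary and lacks an immediate analogue for $S^2,\RP^2$.
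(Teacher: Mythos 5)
There are genuine gaps at both ends of your argument, and they sit exactly where the paper has to work hardest: the closed surfaces. First, the base case: a closed surface other than $S^2$ and $\RP^2$ need not carry a nowhere-zero vector field --- by Poincar\'e--Hopf this requires $\chi(M)=0$, so your construction already fails for a closed orientable surface of genus $2$. Worse, no section of the kind you want can exist there: a map $f$ with $(x,f(x))\in F(M,2)$ for all $x$ and $f_*=\mathrm{id}$ on $\pi_1$ would be homotopic to the identity (the surface is closed and aspherical) and hence, by the Lefschetz fixed point theorem, would have a fixed point since $\chi(M)\neq 0$. Second, and more seriously, your reduction of the inductive step to the boundary case is not justified: surjectivity of $P_n(M\smallsetminus D^2)\twoheadrightarrow P_n(M)$ does not imply that $\mathfrak{H}_n(M\smallsetminus D^2)\to\mathfrak{H}_n(M)$ is onto, because a preimage of a Cohen braid has no reason to be Cohen. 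This is precisely the point where the paper brings in a nontrivial external input --- Theorem~1.1 of \cite{BMVW}, the surjectivity of $\Brun_k(\tilde M)\to\Brun_k(M)$ for $\tilde M=M\smallsetminus\{q_1\}$ --- and runs an induction on $k$ through the diagram comparing the left-exact sequences $\Brun_k\hookrightarrow\mathfrak{H}_k\to\mathfrak{H}_{k-1}$ for $\tilde M$ and for $M$ to conclude that $\mathfrak{H}_k(\tilde M)\to\mathfrak{H}_k(M)$ is onto. Without that ingredient the closed case is open in your write-up. (Note also that the cofaces $d^i$ are only defined when $\partial M\neq\emptyset$, so even writing $\beta_0=d^1\alpha$ presupposes this reduction.)

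For surfaces with boundary your inductive scheme is sound in outline --- the bi-$\Delta$ algebra giving $d_1\beta_0=\alpha$ and $d_i\beta_0=d^1d_1\alpha$ for $i\geq 2$, and the reduction to a correcting braid $\delta$, are correct --- but the construction of $\delta$ is the entire substantive content of this case, and you leave it as an acknowledged ``obstacle.'' The paper does not reprove it: it simply cites \cite[Proposition~3.1]{Wu4}, which establishes this surjectivity for the Cohen construction on any bi-$\Delta$-group. If you want a self-contained route in the boundary case, the cleanest is via the James--Hopf operations of Section~4.3: since $d_iH_{k,n}(\beta)=H_{k,n-1}(\beta)$ and every $\alpha\in\mathfrak{H}_{n-1}(M)$ decomposes as $\prod_{k}H_{k,n-1}(\delta_k(\alpha))$ (Proposition~\ref{proposition6.5}), the element $\prod_{k}H_{k,n}(\delta_k(\alpha))$ is an explicit preimage of $\alpha$ in $\mathfrak{H}_n(M)$; your coherence check amounts to reproving exactly this.
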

\begin{proof}
By the definition of Brunnian braids there is a  short left exact  sequence (we do not claim that $d_1$ is onto at the moment)
\begin{diagram}
\Brun_{k}( M)&\rInto &\mathfrak{H}_k( M)&\rTo^{d_1}&\mathfrak{H}_{k-1}( M).
\end{diagram}
If $M$ is a connected $2$-manifold with nonempty boundary, the assertion follows from ~\cite[Proposition 3.1]{Wu4} by using the bi-$\Delta$-structure on $\{P_{n+1}(M)\}_{n\geq0}$.

Suppose that $M$ is a closed manifold and $M\not=S^2$ or $\RP^2$. Let $\tilde M=M\smallsetminus\{q_1\}$. We show by induction that
$$
\mathfrak{H}_k(\tilde M)\rTo \mathfrak{H}_k(M)
$$
is onto for each $k$. Clearly the statement holds for $k=1$. Assume that the statement holds for $k-1$ with $k\geq 2$. Consider the commutative diagram
\begin{diagram}
\Brun_{k}(\tilde M)&\rInto &\mathfrak{H}_k(\tilde M)&\rOnto^{d_1}&\mathfrak{H}_{k-1}(\tilde M)\\
\dTo&&\dTo&&\dOnto\\
\Brun_{k}(M)&\rInto &\mathfrak{H}_k(M)&\rTo^{d_1}&\mathfrak{H}_{k-1}(M),\\
\end{diagram}
where the right column is onto by induction. By Theorem ~1.1 of \cite{BMVW}, $$\Brun_k(\tilde M)\to \Brun_k(M)$$ is onto and so the middle column $\mathfrak{H}_k(\tilde M)\to \mathfrak{H}_k(M)$ is onto. The induction is finished and so
$$
\mathfrak{H}_n(\tilde M)\to \mathfrak{H}_{n}(M)
$$
is an epimorphism for each $n$. From the right square in the above diagram,
$$
d_1\colon \mathfrak{H}_n(M)\to \mathfrak{H}_{n-1}(M)
$$
is an epimorphism for each $n\geq 2$ and hence the result.
\end{proof}

\begin{proof}[Proof of Theorem 1.1]
If there exists $\beta$ such that $d_1\beta=\cdots=d_{n}\beta=\alpha$, then $\alpha\in \mathfrak{H}^B_{n-1}(M)$ by Proposition~\ref{proposition6.2} and so $d_1\alpha=\cdots=d_{n-1}\alpha$.

Conversely suppose that $d_1\alpha=\ldots=d_{n-1}\alpha$. Then $\alpha\in \mathfrak{H}^B_{n-1}(M)$. If $\alpha\in \mathfrak{H}_{n-1}(M)$, then the equation in the statement has a solution for $\alpha$ by Proposition~\ref{proposition6.4}. If $\alpha\not\in\mathfrak{H}_{n-1}(M)$, then $\Delta_{n-1}\alpha\in \mathfrak{H}_{n-1}(M)$. Thus there exists $\gamma$ such that $d_1\gamma=\cdots=d_{n}\gamma=\Delta_{n-1}\alpha$. Since $d_1\Delta_{n}=\cdots=d_{n}\Delta_{n}=\Delta_{n-1}$, we have
$$
d_1(\Delta_{n}^{-1}\gamma)=\cdots=d_{n}(\Delta_{n}^{-1}\gamma)=\alpha
$$
and hence the result.
\end{proof}
\begin{cor}\label{corollary6.5}
Let $M$ be any connected $2$-manifold such that $M\not=S^2$ or $\RP^2$. Then there exists the 
following short exact sequence 
\begin{equation}
1\to\Brun_n(M)\to \mathfrak{H}_n(M)\buildrel d_1\over\longrightarrow \mathfrak{H}_{n-1}(M)\to 1
\label{eq:exseH}
\end{equation}
which connects the $n$th, $(n-1)$th Cohen braid groups and Brunnian braids on $n$-strands. \hfill $\Box$
\end{cor}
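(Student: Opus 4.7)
The plan is to view this corollary as essentially a repackaging of Proposition~\ref{proposition6.4} together with a direct identification of the kernel of $d_1$, so there is very little left to prove. First I would check that the inclusion $\Brun_n(M)\hookrightarrow \mathfrak{H}_n(M)$ is well-defined: a Brunnian braid $\beta$ satisfies $d_i\beta=1$ for every $i$, so trivially all $d_i\beta$ agree and $\beta\in\mathfrak{H}_n^B(M)$; moreover, since $q_{n-1}\circ d_i^B = d_i^\Sigma\circ q_n$ (as used in the proof of Proposition~\ref{proposition6.3}), each face sends the underlying permutation of $\beta$ to the identity, which for $n\geq 2$ forces the permutation itself to be trivial, so $\beta\in P_n(M)$ and hence $\beta\in \mathfrak{H}_n(M)$.

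Next I would establish exactness at $\mathfrak{H}_n(M)$. Injectivity of $\Brun_n(M)\to \mathfrak{H}_n(M)$ is automatic as an inclusion of subgroups. For exactness in the middle, suppose $\beta\in\mathfrak{H}_n(M)$ lies in $\ker d_1$; by the defining relation $d_1\beta=d_2\beta=\cdots=d_n\beta$ of the Cohen subgroup, all $d_i\beta=1$, so $\beta\in\Brun_n(M)$. The reverse inclusion is immediate, giving $\ker d_1 = \Brun_n(M)$.

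Finally, surjectivity of $d_1\colon \mathfrak{H}_n(M)\to \mathfrak{H}_{n-1}(M)$ is precisely Proposition~\ref{proposition6.4} under the hypothesis $M\neq S^2, \RP^2$. Since no part of the corollary needs any work beyond what was already done, I do not anticipate any genuine obstacle; the main content lies in Proposition~\ref{proposition6.4}, whose proof used the bi-$\Delta$-structure of Proposition~\ref{proposition6.1} for surfaces with boundary and, for closed $M\neq S^2,\RP^2$, an induction comparing $\tilde M = M\setminus\{q_1\}$ to $M$ via the surjectivity of $\Brun_k(\tilde M)\to \Brun_k(M)$ from Theorem~1.1 of~\cite{BMVW}. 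The corollary simply records this data as a short exact sequence relating the $n$th and $(n-1)$st Cohen braid groups through the Brunnian kernel.
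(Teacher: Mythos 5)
Your proposal is correct and follows exactly the route the paper intends: the corollary is stated with no separate proof because it merely combines the left exact sequence $\Brun_n(M)\hookrightarrow\mathfrak{H}_n(M)\to\mathfrak{H}_{n-1}(M)$ already displayed at the start of the proof of Proposition~\ref{proposition6.4} (exactness in the middle being immediate from $d_1\beta=\cdots=d_n\beta$) with the surjectivity of $d_1$ established in that proposition. Your additional check that Brunnian braids are pure is a harmless refinement of what the paper takes for granted.
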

By definition $B_1(M) = P_1(M) = \pi_1(M,p_1) = \pi_1(M).$ Let $g$ be an element 
of $\pi_1(M,p_1).$
Then $g$ is represented by a loop with the ends in $p_1$. Under canonical inclusion
\begin{diagram}
P_{1}( M)&\rInto& P_n( M)
\end{diagram}
element $g$ defines an element in $P_n(M)$ which we denote also by $g$.
In the group $P_n(M),$ $n \geq 2$ define elements
$$
g_1 = g,~~g_2 = \sigma_1 g_1 \sigma_1^{-1},~~g_3 = \sigma_2 g_2 \sigma_2^{-1},~~\ldots,
g_n = \sigma_{n-1} g_{n-1} \sigma_{n-1}^{-1}.
$$
Any $g_i$ is represented by a braid that consists of a loop with the ends in the point $p_i$ (which is a starting point of
 the $i$-th string) while the rest of the braid is trivial. Define $h = g_1 g_2 \ldots g_n.$
\begin{prop}
Element $h$ is a Cohen braid in $P_n(M).$ \hfill  $\Box$
\end{prop}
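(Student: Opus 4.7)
The plan is to reduce the claim to an explicit description of how each face map $d_j$ acts on the individual factors $g_i$, and then to use multiplicativity of $d_j$ on the pure braid subgroup to evaluate $d_j(h)$ simultaneously for every $j$.

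The crux is the following lemma, which I would establish first: for all $1 \le i, j \le n$,
\[
d_j(g_i) = \begin{cases} g_i & \text{if } i < j, \\ 1 & \text{if } i = j, \\ g_{i-1} & \text{if } j < i, \end{cases}
\]
where the right-hand side is interpreted in $P_{n-1}(M)$. Geometrically this is immediate from the description of $g_i$ recorded just before the proposition: $g_i$ is represented by a braid whose only nontrivial strand is strand $i$, tracing out a loop that represents $g$. Deleting strand $i$ destroys the loop and produces the trivial $(n-1)$-braid, while deleting any other strand only renumbers the carrying strand, leaving its index unchanged if $i < j$ and dropping it by $1$ if $j < i$. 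A purely algebraic verification is also available by writing $g_1 = d^n d^{n-1} \cdots d^2(g)$, using $g_{i+1} = \sigma_i g_i \sigma_i^{-1}$ for an induction on $i$, and invoking the bi-$\Delta$-identities listed in the excerpt together with the crossed-homomorphism formula $d_j(\beta\gamma) = d_j(\beta) d_{j\cdot\beta}(\gamma)$ of Proposition~\ref{proposition6.2}.

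With the lemma in hand the remainder is bookkeeping. Since each $g_i \in P_n(M)$, the product $h = g_1 g_2 \cdots g_n$ also lies in $P_n(M)$; on $P_n(M)$ every braid has trivial permutation, so $d_j$ is a genuine group homomorphism and
\[
d_j(h) = \prod_{i=1}^{n} d_j(g_i).
\]
Substituting the three cases of the lemma, the factor $d_j(g_j) = 1$ drops out, the indices $i<j$ contribute $g_1 g_2 \cdots g_{j-1}$ in $P_{n-1}(M)$, and the indices $i>j$ contribute $g_j g_{j+1} \cdots g_{n-1}$ via the shift $g_i \mapsto g_{i-1}$. Thus $d_j(h) = g_1 g_2 \cdots g_{n-1}$ independently of $j$, whence $h \in \mathfrak{H}^B_n(M)$; being pure, $h \in \mathfrak{H}_n(M)$.

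The only step with genuine content is the key lemma, and the main obstacle there is the careful verification that the geometric picture captures the correct value of $d_j(g_i)$ in all three cases, since $d_j$ is not itself a homomorphism on $B_n(M)$. The boundary values $j=1$ and $j=n$ deserve a brief sanity check, but the same expression $g_1 \cdots g_{n-1}$ emerges: at $j=1$ the $i=1$ factor is killed and the remaining indices shift down, while at $j=n$ the $i=n$ factor is killed and the earlier indices are unchanged.
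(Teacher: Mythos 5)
Your proof is correct, and since the paper states this proposition without proof (relying on the geometric description of the $g_i$ given just before it), your argument simply makes explicit the intended justification: the case analysis for $d_j(g_i)$ together with multiplicativity of $d_j$ on pure braids yields $d_j(h)=g_1\cdots g_{n-1}$ for every $j$.
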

Denote by $K_1$ a set of generators for $\pi_1(M,p_1)$ and by $K_n \subseteq P_n(M)$ the set of elements $h$
constructed from  elements of $H_1$ by the rules described above. Then $K_n \subseteq \mathfrak{H}_n(M).$
Our main conjecture on a generating set of $\mathfrak{H}_n(M)$ is as follows.

{\bf Conjecture.} Let $M$ be a connected surface, $M \not= S^2$ or $\mathbb{R}P^2.$ The group $\mathfrak{H}_n(M)$ is generated by
elements $K_n$ and the group $\mathfrak{H}_n$
of the Cohen braids on the disc.

\medskip

\section{Cohen and Brunnian braids in the braid group of the disk}
\subsection{General properties}
Recall that the generators for the pure braid group $P_n$ are given by
\begin{equation*}\label{eq:Aij}
A_{i,j}=\sigma_{j-1}\sigma_{j-2}\cdots\sigma_{i+1}\sigma_i^2\sigma_{i+1}^{-1}\cdots
\sigma_{j-2}^{-1}\sigma_{j-1}^{-1}
\end{equation*}
for $1\leq i<j\leq n$ (see \cite{Ve9}, for example). 
\begin{prop}\label{norm}
If $n \geq 3$ then the group $\mathfrak{H}_n^B$ is not normal in $B_n$. If a Cohen braid $\alpha$ in
$\mathfrak{H}_n$ is such that for any generator $A_{ij} \in P_n$ the braid
$A_{ij}^{-1} \alpha A_{ij}$ is Cohen then the element $d_1(\alpha)$ lies in the center of $P_{n-1}.$
\end{prop}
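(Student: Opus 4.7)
The plan is to prove the second assertion first and then deduce the non-normality using Proposition~\ref{proposition6.3}.

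For the second assertion, I will use the fact that each face map $d_k$ restricts to a group homomorphism on $P_n$: the formula $d_k(\alpha\beta)=d_k(\alpha)d_{k\cdot\alpha}(\beta)$ from~(\ref{equation6.2}) collapses since pure braids act trivially on strand labels. Combined with the Cohen identity $d_k(\alpha)=d_1(\alpha)$, conjugation by $A_{ij}$ gives
$$
d_k(A_{ij}^{-1}\alpha A_{ij}) \;=\; d_k(A_{ij})^{-1}\, d_1(\alpha)\, d_k(A_{ij}).
$$
Geometrically $A_{ij}$ is a loop of strand $i$ around strand $j$, so $d_i(A_{ij})=d_j(A_{ij})=1$ (consistent with the bi-$\Delta$ identity $d_id^i=\id$), while for $k\notin\{i,j\}$ the braid $d_k(A_{ij})$ is a generator of $P_{n-1}$ obtained by relabeling indices. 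The hypothesis that $A_{ij}^{-1}\alpha A_{ij}$ is Cohen forces all the values $d_k(A_{ij}^{-1}\alpha A_{ij})$ to coincide; matching those for $k\in\{i,j\}$, which equal $d_1(\alpha)$, with those for $k\notin\{i,j\}$ forces $d_k(A_{ij})$ to commute with $d_1(\alpha)$. Since taking $k=n$ and $(i,j)$ with $j\leq n-1$ realizes every generator $A_{pq}$ of $P_{n-1}$ as $d_n(A_{pq})=A_{pq}$, the element $d_1(\alpha)$ commutes with every generator of $P_{n-1}$ and is therefore central.

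For the non-normality I plan to give a uniform counterexample for all $n\geq 3$. Take $\alpha=\Delta_n\in\mathfrak{H}_n^B$ and conjugate by the generator $\sigma_1\in B_n$. Using the classical identity $\Delta_n\sigma_i=\sigma_{n-i}\Delta_n$ one has $\sigma_1^{-1}\Delta_n\sigma_1=\sigma_1^{-1}\sigma_{n-1}\Delta_n$, whose image in $\Sigma_n$ is the conjugate $(1,2)\,w_n\,(1,2)$, where $w_n=(1,n)(2,n-1)\cdots$ denotes the image of $\Delta_n$. By Proposition~\ref{proposition6.3}, the image of $\mathfrak{H}_n^B$ in $\Sigma_n$ is the two-element subgroup $\{e,w_n\}$, so it suffices to show $(1,2)w_n(1,2)\notin\{e,w_n\}$ for $n\geq 3$. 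It is certainly not the identity, and the computation $(1,2)w_n(1)=(1,2)(n)=n$ versus $w_n(1,2)(1)=w_n(2)=n-1$ shows that $(1,2)$ does not commute with $w_n$, so the conjugate differs from $w_n$ as well. Consequently $\sigma_1^{-1}\Delta_n\sigma_1\notin\mathfrak{H}_n^B$, which proves $\mathfrak{H}_n^B$ is not normal in $B_n$.

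The main obstacle I anticipate is verifying that the collection $\{d_k(A_{ij}):k\notin\{i,j\}\}$ really does generate $P_{n-1}$; the choice $k=n$ makes this transparent, so the argument reduces to a short check on cycle structure in $\Sigma_n$ and avoids any intricate braid-word manipulation.
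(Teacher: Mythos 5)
Your proof is correct, and the two halves compare differently with the paper's argument. For the second assertion you follow essentially the same route as the paper: apply the faces $d_k$ to $A_{ij}^{-1}\alpha A_{ij}$, use that $d_k$ is a homomorphism on pure braids and that $d_k(\alpha)=d_1(\alpha)$, and conclude that $d_1(\alpha)$ commutes with the images $d_k(A_{ij})$. The paper organizes this as a four-case analysis on the position of $k$ relative to $i<j$ and observes that the resulting conjugators $A_{i-1,j-1}$, $A_{i,j-1}$, $A_{i,j}$ sweep out all generators of $P_{n-1}$; your choice of $k=n$ with $j\leq n-1$, giving $d_n(A_{ij})=A_{ij}$ directly, is a cleaner way to see that every generator of $P_{n-1}$ is realized, and is a legitimate simplification. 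For the non-normality, your argument is genuinely different: the paper conjugates $\Delta_3$ by $\sigma_1$ and checks by explicit computation that $d_1$, $d_2$, $d_3$ of $\sigma_2\sigma_1^2$ disagree, which as written only treats $n=3$; you instead pass to the symmetric group, use that by (the proof of) Proposition~\ref{proposition6.3} the image of $\mathfrak{H}_n^B$ in $\Sigma_n$ is the two-element set $\{e,w_n\}$, and check that $(1,2)$ does not commute with the longest element $w_n$ for $n\geq 3$. This buys a uniform proof for all $n\geq 3$ at the cost of invoking Proposition~\ref{proposition6.3}, whereas the paper's computation is self-contained but only exhibits the case $n=3$ explicitly. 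Both arguments are sound.
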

\begin{proof}
The element $\Delta_3$ lies in $\mathfrak{H}_3^{B}.$ After conjugation by  $\sigma_1$ 
it is equal to $\sigma_2 \, \sigma_1^2$:
$$
\sigma_1^{-1} \, \Delta_3 \, \sigma_1 = \sigma_2 \, \sigma_1^2.
$$
Consider the action of operations $d_i$ on this element:
$$
d_1(\sigma_2 \, \sigma_1^2) = \sigma_1,~~~d_2(\sigma_2 \, \sigma_1^2) = \sigma_1^2,
~~~d_3(\sigma_2 \, \sigma_1^2) = e,
$$
i.e. $\sigma_2 \, \sigma_1^2$ is not a Cohen braid.
Hence the group $\mathfrak{H}_3^B$ is not normal in $B_3.$

To prove the second statement let us apply the operations $d_k$ to
the element $A_{ij}^{-1} \alpha A_{ij}$:
$$
d_k (A_{ij}^{-1} \alpha A_{ij}) = d_k(A_{ij}^{-1}) \, d_k(\alpha) \, d_k(A_{ij}).
$$
The result depends  of $k$.  There are the following cases:

1) $1 \leq k < i < j \leq n;$

2) $k = i$ or $k = j;$

3) $1 \leq i < k < j \leq n;$

4)  $1 \leq j < k \leq n.$

In the first case we have
$$
d_k (A_{ij}^{-1} \alpha A_{ij}) = A_{i-1,j-1}^{-1} \, d_k(\alpha) \, A_{i-1,j-1}.
$$

In the second  case we have
$$
d_k (A_{ij}^{-1} \alpha A_{ij}) =  d_k(\alpha).
$$

In the third case we have
$$
d_k (A_{ij}^{-1} \alpha A_{ij}) = A_{i,j-1}^{-1} \, d_k(\alpha) \, A_{i,j-1}.
$$

In the forth  case we have
$$
d_k (A_{ij}^{-1} \alpha A_{ij}) = A_{i,j}^{-1} \, d_k(\alpha) \, A_{i,j}.
$$

If we denote $d_k(\alpha)$ by $\beta \in \mathfrak{H}_{n-1}$ then the following relations are true
$$
\beta = A_{i-1,j-1}^{-1} \, \beta \, A_{i-1,j-1} = A_{i,j-1}^{-1} \, \beta \, A_{i,j-1} =
A_{i,j}^{-1} \, \beta \, A_{i,j}
$$
or
$$
\beta = \beta \, [\beta, A_{i-1,j}] = \beta \, [\beta, A_{i,j-1}] = \beta \, [\beta, A_{i,j}].
$$
Hence, $\beta$ commutes with any generator of $P_{n-1},$ i.e. $\beta \in Z(P_{n-1}) = Z(B_{n-1}).$
\end{proof}
Braids $\Delta_n^k$ are Cohen.
It was proved in \cite[Lemma 1]{B2} that in $B_3$ the following formula is true
$$
\Delta_3^{2k} = A_{12}^k \, (A_{13} A_{23})^k.
$$
It is not difficult to see that in general case we have the similar result
$$
\Delta_n^{2k} = A_{12}^k \, (A_{13} A_{23})^k \ldots (A_{1n} A_{2n} \ldots A_{n-1,n})^k.
$$
 Also, it is not difficult to see that  the braids
$$
A_{12}^k \, (A_{13}^k A_{23}^k) \ldots (A_{1n}^k A_{2n}^k \ldots A_{n-1,n}^k),~~~k \in \mathbb{Z}
$$
are Cohen. 

\begin{prop}
Let
$$
\alpha = [A_{i_1,j_1}^{k_1}, A_{i_2,j_2}^{k_2}, \ldots, A_{i_l,j_l}^{k_l}],~~~k_i \in \mathbb{Z},
$$
be a commutator in $P_n$ with some arrangement of the brackets. If
$$
\{ i_1, j_1, i_2, j_2, \ldots, i_l, j_l \} = \{ 1, 2, \ldots, n \}
$$
then $\alpha \in {\rm Brun}_n.$ \hfill $\Box$
\end{prop}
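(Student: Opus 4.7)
The plan is to verify directly that $d_k(\alpha) = 1$ for every $k \in \{1, \ldots, n\}$, which is exactly the Brunnian condition.

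First I would record the action of each face homomorphism $d_k \colon B_n \to B_{n-1}$ on an Artin generator $A_{i,j}$. Geometrically $A_{i,j}$ is the braid in which strand $j$ loops once around strand $i$ while the remaining strands are trivial, so removing strand $k$ kills $A_{i,j}$ precisely when $k \in \{i,j\}$; explicitly,
\[
d_k(A_{i,j}) = \begin{cases} A_{i-1,j-1} & \text{if } k < i, \\ 1 & \text{if } k = i \text{ or } k = j, \\ A_{i,j-1} & \text{if } i < k < j, \\ A_{i,j} & \text{if } k > j, \end{cases}
\]
which is also implicit in the case analysis of Proposition~\ref{norm}. The case I really need is the middle one, and the same formulas (with the exponent $k_m$ carried along) apply to $A_{i,j}^{k_m}$.

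Now fix $k \in \{1,\ldots,n\}$. The covering hypothesis $\{i_1, j_1, \ldots, i_l, j_l\} = \{1, \ldots, n\}$ supplies at least one index $m$ with $k \in \{i_m, j_m\}$, so $d_k(A_{i_m, j_m}^{k_m}) = 1$. Because $d_k$ is a group homomorphism, it carries $\alpha$ to an iterated commutator with the same bracketing but with the $m$-th slot replaced by $1$. A short induction on bracket depth (base case $[1,x] = [x,1] = 1$; inductive step: if a subcommutator $C$ contains a trivial slot then $C = 1$, hence any outer $[C, y]$ or $[y, C]$ vanishes) then shows that such an iterated commutator is trivial. Hence $d_k(\alpha) = 1$ for each $k$, and $\alpha \in \Brun_n$. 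There is no genuine obstacle here beyond this bookkeeping; the only point worth writing out carefully is the inductive reduction for arbitrary bracket arrangements of the $l$ arguments, which is immediate.
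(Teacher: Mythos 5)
Your proof is correct. The paper states this proposition without proof (it is marked with $\Box$), and your direct verification --- using the face formulas $d_k(A_{i,j})=1$ for $k\in\{i,j\}$, the fact that $d_k$ restricted to $P_n$ is a homomorphism, and the observation that an iterated commutator with a trivial slot vanishes --- is precisely the intended argument, consistent with the case analysis the authors carry out in their Proposition on non-normality of $\mathfrak{H}_n^B$.
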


\subsection{Cohen braids in $P_2$ and $P_3$} 
For braids on two strings we have: $\mathfrak{H}_2= \Brun_2=P_2=\Z$ and the exact sequence (\ref{eq:exseH})
gives
\begin{equation*}
1\to\Brun_3\to \mathfrak{H}_3\buildrel d_1\over\longrightarrow \Z\to 1.
\end{equation*}
In $\mathfrak{H}_3$ it is the central element $\Delta_3^2$ which is mapped to the generator of 
$\mathfrak{H}_2$, so $\mathfrak{H}_3\cong \Z\times\Brun_3$ and for the next step we have
\begin{equation*}
1\to\Brun_4\to \mathfrak{H}_4\longrightarrow \Z\times \Brun_3\to 1.
\end{equation*}
\smallskip
More precisely, in this case $P_3 = Z \times U_3,$ where $Z = \langle \Delta_3^2 \rangle$
is the center of $B_3$ and $P_3$, $U_3 = \langle A_{13}, A_{23} \rangle \simeq F_2.$ Any element $\beta$ in
$U_3$ has the form
\begin{equation}\label{semi}
\beta = A_{13}^k \, A_{23}^l \, \gamma,~~~k, l \in \mathbb{Z},  \gamma \in U_3'.
\end{equation}
For braids on three strings ${\rm Brun}_3 = U_3'$ (where prime denotes the commutator subgroup). Hence,  $\gamma$ is a Brunnian braid. The action of operations $d_k$ gives:
$$
d_1(\beta) = A_{12}^l,~~~d_2(\beta) = A_{12}^k,~~~d_3(\beta) = e.
$$
Therefore, the braid $\beta$ is  Cohen braid if and only if it is  Brunnian braid. Thus we have a full description
of the Cohen braids in $B_3$: any such braid has a form
$$
\Delta_3^k \, \gamma,~~~k \in \mathbb{Z},  \gamma \in U_3'
$$
and any Cohen braid in $P_3$ has a form
$$
\Delta_3^{2k} \, \gamma,~~~k \in \mathbb{Z},  \gamma \in U_3'.
$$

\subsection{Cohen braids in $P_4$} The group $P_4$ has the decomposition
$$
P_4 = \langle \Delta_4^2 \rangle \times (U_3 \leftthreetimes U_4),
$$
where $U_4=\langle A_{14}, A_{24}, A_{34} \rangle$ 
and the commutator subgroup of $P_4$ decomposes as follows
$$
P_4' = U_3' \leftthreetimes U_4'.
$$
The commutator subgroup $U_3'$ is equal to ${\rm Brun}_3,$ but as a subgroup of $P_4$ it does not lie in
${\rm Brun}_4.$

Take an element $\alpha \in P_4.$ We want to understand under what conditions the element
$\alpha$ lies in
$\mathfrak{H}_4.$ Let $\alpha = \Delta_4^{2k} \beta$ for some integer $k$ and
$\beta \in U_3 \leftthreetimes U_4.$ Since  $\Delta_4^{2k} \in \mathfrak{H}_4,$  we need to understand
under what conditions $\beta$ lies in
$\mathfrak{H}_4.$ Let
$$
\beta = A_{13}^k A_{23}^l A_{14}^m A_{24}^p A_{34}^q \gamma,~~~k, l, m, p, q \in \mathbb{Z},~~\gamma \in
(U_3 \leftthreetimes U_4)'.
$$
Then we have
$$
d_1(\beta) = A_{12}^l A_{13}^p A_{23}^q  d_1(\gamma),~~~
d_2(\beta) = A_{12}^k A_{13}^m A_{23}^q  d_2(\gamma),
$$
$$
d_3(\beta) =  A_{13}^m A_{23}^p  d_3(\gamma),~~~
d_4(\beta) =  A_{13}^k A_{23}^l  d_4(\gamma).
$$
By the definition $\beta \in \mathfrak{H}_4$ if and only if
$$
d_1(\beta) = d_2(\beta) = d_3(\beta) = d_4(\beta).
$$
Since $d_i$ is an homomorphism for pure braids $d_i(\gamma) \in P_3'$ for all $i =1, 2, 3, 4$ and hence  we have
$$
A_{12}^l A_{13}^p A_{23}^q = A_{12}^k A_{13}^m A_{23}^q = A_{13}^m A_{23}^p = A_{13}^k A_{23}^l,
$$
$$
d_1(\gamma) = d_2(\gamma) = d_3(\gamma) = d_4(\gamma).
$$
From these equalities
$$
l = k = 0,~~~p = m = k, ~~~q = p = l,
$$
so $\beta \in (U_3 \leftthreetimes U_4)'.$

We want to understand now are there  Cohen, but non-Brunnian braids in the commutator subgroup $P_4'$. Take
the commutator
$$
\gamma_4 = [A_{12}^2 \, (A_{13}^2 A_{23}^2) \,  (A_{14}^2 A_{24}^2 A_{34}^2),
A_{12}^3 \,  (A_{13}^3 A_{23}^3) \,  (A_{14}^3 A_{24}^3 A_{34}^3)] \in P_4'.
$$
Then
$$
d_1(\gamma_4) = d_2(\gamma_4) = d_3(\gamma_4) = d_4(\gamma_4) = \gamma_3 \in P_3',
$$
where
$$
\gamma_3 = [A_{12}^2 A_{13}^2 A_{23}^2, A_{12}^3 A_{13}^3 A_{23}^3].
$$
To prove that $\gamma_3 \not= e$ find its normal form. Using the conjugation rules in $P_3$ we have
$$
\gamma_3 = A_{23}^{-2} A_{13}^{-1} A_{23} A_{13} A_{23}^{-2} A_{13}^{-2} A_{23} A_{13}^{2} A_{23} A_{13}^{-1}
A_{23} A_{13}^{-1} A_{23}^{-1} A_{13}^{2} A_{23}^{3}.
$$
Hence, $\gamma_4$ is a Cohen but non-Brunnian braids in  $P_4'$.

To find the set of generators for $\mathfrak{H}_4$ we construct a lifting of elements in $\mathfrak{H}_3$ into
 $\mathfrak{H}_4$. To illustrate this let us consider the following example.

 {\bf Example.}  Let us take an element $\alpha_{lm} = [A_{13}^l, A_{23}^m] \in
  \mathfrak{H}_3$ for some non-zero integers
$l$ and $m$. If we consider $\alpha_{lm}$
 as an element in $P_4$ (under the canonical inclusion
$P_{3}\hookrightarrow P_4$),
 then $\alpha_{lm}$ is ${3}$-Brunnian (it becomes trivial after deleting the third string) but does not lie in $\mathfrak{H}_4$
since $d_4(\alpha_{lm}) = \alpha_{lm}$. Let us make the following notation:
 $$
\widetilde{\alpha}_{lm} = [A_{13}^l, A_{23}^m] [A_{24}^l, A_{34}^m] [A_{14}^l, A_{34}^m] [A_{14}^l, A_{24}^m]
\in P_4.
 $$
Then we obtain
$$
d_1(\widetilde{\alpha}_{lm}) = d_2(\widetilde{\alpha}_{lm}) = d_3(\widetilde{\alpha}_{lm}) =
d_4(\widetilde{\alpha}_{lm}) = \alpha_{lm}
$$
and we see that $\widetilde{\alpha}_{lm}$ is a lifting of $\alpha_{lm}$ from $\mathfrak{H}_3$ into
$\mathfrak{H}_4$.

To lift $\widetilde{\alpha}_{lm}$ in $\mathfrak{H}_5$ take the element
$$
\beta_{lm} = \widetilde{\alpha}_{lm} [A_{35}^l, A_{45}^m] [A_{25}^l, A_{45}^m] [A_{25}^l, A_{35}^m]
[A_{15}^l, A_{45}^m] [A_{15}^l, A_{35}^m] [A_{15}^l, A_{25}^m].
$$
We have
$$
d_5(\beta_{lm}) = d_4(\beta_{lm}) = d_3(\beta_{lm}) = d_2(\beta_{lm}) = d_1(\beta_{lm}) = \widetilde{\alpha}_{lm}
$$
and so, we constructed a lifting of $\widetilde{\alpha}_{lm}$ into $\mathfrak{H}_5$.

\begin{prop}
The group $\mathfrak{H}_4$ is generated by elements
$$
\Delta_4,~~~\widetilde{\alpha}_{lm}, l,m \in \mathbb{Z},~~~{\rm Brun}_4.
$$
\end{prop}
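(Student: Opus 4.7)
The plan is to use the short exact sequence
\[
1 \to \Brun_4 \to \mathfrak{H}_4 \xrightarrow{d_1} \mathfrak{H}_3 \to 1
\]
provided by Corollary~\ref{corollary6.5} for the disc. By exactness, a generating set of $\mathfrak{H}_4$ is obtained from $\Brun_4$ together with any chosen lifts along $d_1$ of a generating set of $\mathfrak{H}_3$.

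From the analysis in section~3.2 we have the decomposition $\mathfrak{H}_3 = \langle \Delta_3^2 \rangle \cdot U_3'$ with $U_3' = [F_2,F_2] = \Brun_3$ and $F_2 = U_3 = \langle A_{13}, A_{23} \rangle$. First I would verify that the family $\{\alpha_{lm} = [A_{13}^l, A_{23}^m] : l,m \in \mathbb{Z}\}$ generates $U_3'$ as a subgroup. A typical Schreier generator of $U_3'$ has the form $A_{13}^i A_{23}^j [A_{13}, A_{23}] A_{23}^{-j} A_{13}^{-i}$, and an iterated application of the commutator identities $[uv,w] = [u,w]^v [v,w]$ and $[u,vw] = [u,w][u,v]^w$ lets one express any such conjugate as an explicit product of $\alpha_{lm}^{\pm 1}$'s; for example, $A_{13}[A_{13},A_{23}]A_{13}^{-1} = [A_{13}^2, A_{23}][A_{13}, A_{23}]^{-1} = \alpha_{2,1}\alpha_{1,1}^{-1}$, and a telescoping variant handles higher $i$ and $j$.

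Second, I would confirm that the proposed lifts land in $\mathfrak{H}_4$. The element $\Delta_4^2 \in \langle \Delta_4 \rangle$ satisfies $d_i(\Delta_4^2) = \Delta_3^2$ for every $i$ (since $d_i\Delta_n = \Delta_{n-1}$), so it lifts the central factor of $\mathfrak{H}_3$. The elements $\widetilde{\alpha}_{lm}$ constructed just before the statement already satisfy $d_i(\widetilde{\alpha}_{lm}) = \alpha_{lm}$ by the computation in the example, so $\widetilde{\alpha}_{lm} \in \mathfrak{H}_4$ and lifts $\alpha_{lm}$.

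Finally, the conclusion is standard: given $\beta \in \mathfrak{H}_4$, write $d_1(\beta) \in \mathfrak{H}_3$ as a word $w$ in $\Delta_3^{\pm 2}$ and the $\alpha_{lm}^{\pm 1}$, form the parallel word $\tilde{w}$ in $\Delta_4^{\pm 2}$ and $\widetilde{\alpha}_{lm}^{\pm 1}$, and note that $d_i(\beta\tilde{w}^{-1}) = d_1(\beta\tilde{w}^{-1}) = 1$ for every $i$, since both $\beta$ and $\tilde{w}$ lie in $\mathfrak{H}_4$. Hence $\beta\tilde{w}^{-1} \in \Brun_4$, yielding the required factorization. The principal obstacle is the combinatorial step of showing that $\{\alpha_{lm}\}$ generates $U_3'$ as a subgroup rather than merely as a normal subgroup of $F_2$; the remaining steps reduce to direct applications of the face identities listed in section~2.
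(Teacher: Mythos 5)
Your argument is correct and is essentially the paper's own proof written out in full: the paper justifies the proposition with the single observation that $\Brun_3 = U_3'$ is generated by the commutators $\alpha_{lm}$, leaving the short-exact-sequence and lifting mechanism (which you spell out, including the verification that the $\alpha_{lm}$ generate $U_3'$ as a subgroup) implicit. Your substitution of $\Delta_4^2$ for the stated generator $\Delta_4$ is also the right reading, since $\mathfrak{H}_4=\mathfrak{H}_4^B\cap P_4$ consists of pure braids and $\Delta_4$ is not pure.
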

\begin{proof}
We know that ${\rm Brun}_3 = U_3'$ and $U_3'$ is generated by 
the commutators $\alpha_{lm};$ $l,m \in \mathbb{Z}.$
\end{proof}

\subsection{Constructing a generating set for Cohen braids $\mathfrak{H}_n$
} Let us denote by $GB_n$ the generating set of ${\rm Brun}_n,$ $n = 3, 4, \ldots.$
In the case $n=3$ we have two generating sets for ${\rm Brun}_3.$ The first set is
$$
\alpha_{lm} = [A_{13}^l, A_{23}^m], ~~l, m \in \mathbb{Z} \setminus \{0\}
$$
and the second one as in the case $n > 3$ consists of the elements
$$
[A_{13}^u, A_{23}^v],~~u, v \in U_3.
$$
As for the cases $n = 3, 4$ we have the following statement.

\begin{prop}\label{com}
The group $\mathfrak{H}_n = \mathfrak{H}_n^{B} \cap P_n$
is a subgroup of
$$
Z \times (U_3' \leftthreetimes U_4' \leftthreetimes \ldots \leftthreetimes U_n').
$$
\end{prop}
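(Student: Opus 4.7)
The plan is to argue by induction on $n$, using the Artin combing decomposition
$$
P_n \;=\; Z \times Q_n, \qquad Q_n := U_3 \leftthreetimes U_4 \leftthreetimes \cdots \leftthreetimes U_n,
$$
with $Z=\langle\Delta_n^2\rangle$, which is exactly the framework that was already used for $n=3$ and $n=4$ above. The base case $n=3$ was verified in Section~3.3. For the inductive step, take $\alpha\in\mathfrak{H}_n$ and write $\alpha=\Delta_n^{2k}\beta$ uniquely with $\beta\in Q_n$. Since $\Delta_n^{2k}$ is Cohen, so is $\beta$, so it suffices to show $\beta\in U_3'\leftthreetimes\cdots\leftthreetimes U_n'$. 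Write $\beta=u_3u_4\cdots u_n$ uniquely with $u_j\in U_j$; the task is to show each $u_j\in U_j'$.

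The first key step is to apply $d_n$. Since $A_{ij}$ with $j<n$ does not involve strand $n$, one has $d_n(A_{ij})=A_{ij}$, while $d_n(A_{in})=1$. Hence
$$
d_n(\beta)=u_3u_4\cdots u_{n-1}\in Q_{n-1}.
$$
By Proposition~\ref{proposition6.2}, $d_n(\beta)\in\mathfrak{H}_{n-1}$, and since it already sits inside $Q_{n-1}$, its $\langle\Delta_{n-1}^2\rangle$-component in the decomposition $P_{n-1}=\langle\Delta_{n-1}^2\rangle\times Q_{n-1}$ is trivial. The induction hypothesis applied to $d_n(\beta)$ then forces $u_j\in U_j'$ for every $j=3,\ldots,n-1$.

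The second key step is to extract $u_n\in U_n'$ from the remaining Cohen equations. Write $u_n\equiv A_{1n}^{e_1}\cdots A_{n-1,n}^{e_{n-1}}\pmod{U_n'}$ and project into $P_{n-1}^{\mathrm{ab}}$. Because $u_j\in U_j'\subseteq P_n'$ for $j<n$, each $d_k(u_j)$ lies in $P_{n-1}'$ and contributes $0$ in $P_{n-1}^{\mathrm{ab}}$. Using the standard formulas
$$
d_k(A_{in})=\begin{cases} A_{i,n-1}, & i<k,\\ 1, & i=k,\\ A_{i-1,n-1}, & i>k,\end{cases}
$$
we obtain, modulo commutators,
$$
d_k(\beta)\;\equiv\;\prod_{i<k}A_{i,n-1}^{e_i}\cdot\prod_{i>k}A_{i-1,n-1}^{e_i},\qquad 1\le k<n.
$$
Comparing with $d_n(\beta)\equiv 1$ modulo commutators (again because each $u_j\in U_j'$), the Cohen condition yields that every $A_{l,n-1}$-coefficient of $d_k(u_n)^{\mathrm{ab}}$ vanishes. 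Taking $k=1$ forces $e_2=\cdots=e_{n-1}=0$, and then $k=2$ forces $e_1=0$. Hence $u_n\in U_n'$, completing the induction.

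The main obstacle I anticipate is bookkeeping rather than conceptual: one must check carefully that the Artin decomposition $P_n=Z\times Q_n$ behaves well enough under $d_n$ to power the induction (this works because $d_n$ fixes $U_3,\ldots,U_{n-1}$ pointwise and kills $U_n$), and that the linear system coming from the various $d_k$ acting on $u_n^{\mathrm{ab}}$ is non-degenerate. Everything else is the straightforward semidirect-product-plus-abelianization calculation already foreshadowed in the $n=4$ case.
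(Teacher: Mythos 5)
Your proof is correct. It rests on the same underlying mechanism as the paper's argument: the Artin combing $P_n=\langle\Delta_n^2\rangle\times(U_3\leftthreetimes\cdots\leftthreetimes U_n)$ together with a comparison of the face images $d_k(\beta)$ in the abelianization of $P_{n-1}$, using $d_k(A_{i,n})$ and the fact that $d_k$ carries $P_n'$ into $P_{n-1}'$. The organization, however, is genuinely different. The paper does a single one-shot computation: it writes $\beta=\Delta_n^{2k}\prod A_{ij}^{k_{ij}}\cdot\gamma$ with $\gamma\in P_n'$, kills all exponents $k_{ij}$ at once from $d_1(\beta)=\cdots=d_n(\beta)$, and then must invoke the identity $P_n'=(U_3\leftthreetimes\cdots\leftthreetimes U_n)'=U_3'\leftthreetimes\cdots\leftthreetimes U_n'$ (stated without proof) to land in the asserted subgroup. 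You instead induct on $n$: since $d_n$ fixes $U_3,\ldots,U_{n-1}$ pointwise and kills $U_n$, the inductive hypothesis applied to $d_n(\beta)=u_3\cdots u_{n-1}$ (whose $\langle\Delta_{n-1}^2\rangle$-component vanishes by uniqueness in the direct product) places each $u_j$, $j\le n-1$, in $U_j'$ directly, and only the top block $u_n$ requires the abelianization computation. This buys you two things: you never need the nontrivial decomposition of $P_n'$, and the linear system you must solve at each stage is smaller (just the exponents $e_1,\ldots,e_{n-1}$ of $u_n$, resolved by $k=1$ and $k=2$). The cost is that you must check the compatibility of $d_n$ with the combing and the uniqueness of normal forms at each inductive step, which you do. Both arguments are sound; yours is arguably more self-contained.
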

\begin{proof}
Let $\beta \in \mathfrak{H}_n.$ Then it has a form
$$
\beta = \Delta_n^{2k} (A_{13}^{k_{13}} A_{23}^{k_{23}}) \ldots (A_{1n}^{k_{1n}} A_{2n}^{k_{2n}}
\ldots A_{n-1,n}^{k_{n-1,n}}) \gamma,
$$
for some integers $k,$ $k_{ij}$ and  $\gamma \in P_n'.$ Then
$$
d_1(\beta) = \Delta_{n-1}^{2k}  A_{12}^{k_{23}} \ldots (A_{1,n-1}^{k_{2n}}
\ldots A_{n-2,n-1}^{k_{n-1,n}}) d_1(\gamma),
$$
$$
d_2(\beta) = \Delta_{n-1}^{2k}  A_{12}^{k_{13}} \ldots (A_{1,n-1}^{k_{1n}}
\ldots A_{n-2,n-1}^{k_{n-1,n}}) d_2(\gamma),
$$
$$
..........................................................
$$
$$
d_n(\beta) = \Delta_{n-1}^{2k}  (A_{13}^{k_{13}} A_{23}^{k_{23}}) \ldots (A_{1,n-1}^{k_{1,n-1}}
A_{2,n-1}^{k_{2,n-1}}
\ldots A_{n-2,n-1}^{k_{n-2,n-1}}) d_n(\gamma).
$$
Since all $d_i$ are homomorphisms $P_n'$ goes into $P_{n-1}'$ by the action of $d_i$.  It follows from the equalities
$$
d_1(\beta) = d_2(\beta) = \ldots = d_n(\beta)
$$
 that all $k_{ij}$ are equal to zero and
$$
d_1(\gamma) = d_2(\gamma) = \ldots = d_n(\gamma).
$$
Since we know that
$$
P_n' = (U_3 \leftthreetimes U_4 \leftthreetimes \ldots \leftthreetimes U_n)' =
U_3' \leftthreetimes U_4' \leftthreetimes \ldots \leftthreetimes U_n',
$$
the result follows.
\end{proof}
To find a set of generators for $\mathfrak{H}_{n+1}$ we intend to construct a lifting
from ${\rm Brun}_{n}$ to $\mathfrak{H}_{n+1}$. We know that ${\rm Brun}_{n} \subset U_n.$ Let
$w \in {\rm Brun}_{n}$. Then as a word in the free group $U_n$, $w$ contains all generators  $A_{1n},$ $A_{2n},$
$\ldots,$ $A_{n-1,n}.$ Let us denote by ${\bf n}$ the set of the first $n$ positive integers ${\bf n} = \{ 1, 2, \ldots, n \}$.  Define $n$ maps $f_i,$
$i = 1, 2, \ldots, n$
from ${\bf n}$ into ${\bf n+1}$ by the rule
$$
f_i(k) = \left\{
\begin{array}{ll}
        k & {\rm if}~ k < i,  \\
        k+1 & {\rm if}~ k \geq i,
      \end{array}
\right.
~~~i = 1, 2, \ldots, n.
$$
Let $f$ is one of $f_i$ and $w = w(A_{1n}, A_{2n}, \ldots, A_{n-1,n})$ is an element  
in $P_n$. Then let us define an action
of $f$ on $w$ by the rule
$$
w^f = w(A_{f(1),f(n)}, A_{f(2),f(n)}, \ldots, A_{f(n-1),f(n)}).
$$
We claim that for $w \in {\rm Brun}_n$ the braid
$$
\widetilde{w} = w \, w^{f_1} \, w^{f_2} \, \ldots \, w^{f_{n}}
$$
is a lifting of $w$ and lies in $\mathfrak{H}_{n+1}.$
\begin{lem}
If $w \in {\rm Brun}_n$ then the braid $\widetilde{w}$ lies in $\mathfrak{H}_{n+1}$ and $d_1(\widetilde{w}) = w.$
\end{lem}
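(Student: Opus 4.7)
The plan is to verify directly that $d_j(\widetilde{w}) = w$ for every $j \in \{1,\ldots,n+1\}$; this simultaneously shows $\widetilde{w} \in \mathfrak{H}^B_{n+1}$ (and hence, being a product of pure braids, $\widetilde{w} \in \mathfrak{H}_{n+1}$) and yields the stated formula $d_1(\widetilde{w}) = w$. Since each $d_j$ is a group homomorphism on pure braids (as noted in the proof of Proposition~\ref{proposition6.2}), distribute $d_j$ across the product:
$$
d_j(\widetilde{w}) = d_j(w) \cdot \prod_{i=1}^{n} d_j(w^{f_i}),
$$
and the task reduces to showing that exactly one factor on the right equals $w$ while all others are trivial.

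I analyze each factor $d_j(w^{f_i})$ according to how $j$ sits relative to the ``missing strand'' of $w^{f_i}$. By construction every letter of $w^{f_i}$ has the form $A_{f_i(k),n+1}$ for some $k \in \{1,\ldots,n-1\}$, so strand $i$ never occurs and strand $n+1$ always does. Three subcases arise. If $j = i$, strand $i$ is trivial in $w^{f_i}$, so $d_i(w^{f_i}) = w$ after the natural relabeling. If $j = n+1$, every generator of $w^{f_i}$ involves strand $n+1$ and is killed, giving $d_{n+1}(w^{f_i}) = 1$. The interesting case is $j \leq n$ with $j \neq i$: here the substitution $\psi\colon U_n \to U_n$ defined by $A_{k,n} \mapsto d_j(A_{f_i(k),n+1})$ is a homomorphism that sends $A_{k_0,n}$ to $1$ (where $k_0 = f_i^{-1}(j)$) and sends the remaining generators to distinct generators of $U_n$. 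Because $U_n$ is free, the normal closure of $A_{k_0,n}$ in $U_n$ coincides with the kernel of $d_{k_0}|_{U_n}\colon U_n \to U_{n-1}$ and is contained in $\ker \psi$; hence $\psi$ factors through $d_{k_0}|_{U_n}$. The Brunnian hypothesis gives $d_{k_0}(w) = 1$, so $d_j(w^{f_i}) = \psi(w) = 1$. The analogous analysis applied to the leading factor $w$ itself (viewed in $P_{n+1}$ with strand $n+1$ trivial) yields $d_{n+1}(w) = w$ and $d_j(w) = 1$ for $j \leq n$, again by the Brunnian property.

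Combining these cases, $d_j(\widetilde{w})$ has a single nontrivial factor equal to $w$: it is $d_{n+1}(w)$ when $j = n+1$, and $d_j(w^{f_j})$ when $j \leq n$. Consequently $d_j(\widetilde{w}) = w$ for every $j$, proving both statements of the lemma. The main obstacle is the subcase $j \neq i$, $j \leq n$; making the factoring-through-$d_{k_0}|_{U_n}$ argument precise rests only on the freeness of $U_n$ on $\{A_{1,n},\ldots,A_{n-1,n}\}$ together with the Brunnian vanishing $d_{k_0}(w) = 1$, so no further combinatorial input is needed.
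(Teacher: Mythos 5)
Your proof is correct, and at the crucial step it takes a genuinely different route from the paper's. Both arguments share the same skeleton: distribute $d_j$ over $\widetilde{w}=w\,w^{f_1}\cdots w^{f_n}$, identify the one factor that survives as $w$ (namely $w^{f_j}$ for $j\leq n$, or the leading $w$ for $j=n+1$), and show all other factors vanish. The paper establishes the vanishing by first invoking the description of ${\rm Brun}_n$ from \cite{BMVW} as the symmetric commutator product $[[R_1,\ldots,R_{n-1}]]_S$, reducing to generators of the form $[A_{\sigma(1),n}^{u_1},\ldots,A_{\sigma(n-1),n}^{u_{n-1}}]$, and observing that $d_j$ kills one entry of such an iterated commutator and hence the whole commutator; the case of a general product of generators is then only asserted to work ``as in the previous case.'' You instead treat an arbitrary $w\in{\rm Brun}_n\subseteq U_n$ at once: the substitution $A_{k,n}\mapsto d_j(A_{f_i(k),n+1})$ kills exactly one free generator and is injective on the rest, so by freeness of $U_n$ it factors through $d_{k_0}|_{U_n}$, and the Brunnian hypothesis $d_{k_0}(w)=1$ forces $d_j(w^{f_i})=1$. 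This buys a uniform argument that needs no generating set for ${\rm Brun}_n$ --- only the inclusion ${\rm Brun}_n\subseteq U_n$, the freeness of $U_n$, and the defining property of Brunnian braids --- and it handles arbitrary elements (and arbitrary $\sigma$) without the reduction-to-generators step, thereby also closing the small gap the paper leaves at the end of its proof. What the paper's route buys in exchange is the explicit commutator form of $d_i(\widetilde{w})$, which the authors reuse in the surrounding discussion of generating sets for $\mathfrak{H}_n$.
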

\begin{proof}
As it was proved in \cite{BMVW}
$$
{\rm Brun}_n = [[R_1, R_2, \ldots, R_{n-1}]]_S = \prod_{\sigma \in \Sigma_{n-1}} [R_{\sigma(1)}, R_{\sigma(2)},
\ldots, R_{\sigma(n-1)}],
$$
where
$R_i = \langle\langle A_{in}\rangle\rangle^{P_n}$ is the normal closure of $A_{in}$ in $P_n.$ On the other hand,
the subgroup $U_n = \langle A_{1n}, A_{2n}, \ldots, A_{n-1,n},\rangle$ is normal in $P_n$ and from conjugation
rules we see that $R_i = \langle\langle A_{in}\rangle\rangle^{P_n} = \langle\langle A_{in}\rangle\rangle^{U_n}.$
Hence, ${\rm Brun}_n$ is generated by elements
\begin{equation}
[A_{\sigma(1),n}^{u_1}, A_{\sigma(2),n}^{u_2}, \ldots, A_{\sigma(n-1),n}^{u_{n-1}}],~~~\sigma \in \Sigma_{n-1},
\label{eq:gen}
\end{equation}
where $u_i = u_i(A_{1n}, A_{2n}, \ldots, A_{n-1,n})$ are elements in $U_n.$

Take a generator
$$
w = [A_{\sigma(1),n}^{u_1}, A_{\sigma(2),n}^{u_2}, \ldots, A_{\sigma(n-1),n}^{u_{n-1}}]
$$
of ${\rm Brun}_n$. For simplicity we will assume that the permutation $\sigma$ is trivial. The proof in 
general case is similar. Let us construct a braid
$$
\widetilde{w} = w^{f_0} \, w^{f_1} \, w^{f_2} \, \ldots \, w^{f_{n}},
$$
which lies in $P_{n+1}.$ To prove that this braid is a Cohen braid, find
\begin{multline*}
\widetilde{w} = [A_{1n}^{u_1}, A_{2n}^{u_2}, \ldots, A_{n-1,n}^{u_{n-1}}] \,
[A_{2,n+1}^{u_1^{f_1}}, A_{3,n+1}^{u_2^{f_1}}, \ldots, A_{n,n+1}^{u_{n-1}^{f_1}}] \cdot \\
[A_{1,n+1}^{u_1^{f_2}}, A_{3,n+1}^{u_2^{f_2}}, \ldots, A_{n,n+1}^{u_{n-1}^{f_2}}] \ldots
[A_{1,n+1}^{u_1^{f_n}}, A_{2,n+1}^{u_2^{f_n}}, \ldots, A_{n,n+1}^{u_{n-1}^{f_n}}].
\end{multline*}
Then
$$
d_i(\widetilde{w}) = [A_{1n}^{d_i(u_1^{f_i})}, A_{2n}^{d_i(u_2^{f_i})}, \ldots, A_{n-1,n}^{d_i(u_{n-1}^{f_i})}],
~~~i = 1, 2, \ldots, n.
$$
We have:
\begin{multline*}
d_i(u_j^{f_i}) = d_i(u_j(A_{1,n+1}, \ldots, A_{i-1,n+1}, A_{i+1,n+1}, \ldots, A_{n,n+1})) = \\
u_j(A_{1n}, \ldots, A_{i-1,n}, A_{i,n}, \ldots, A_{n-1,n}) = u_j,~~~j = 1, 2, \ldots, n-1.
\end{multline*}
Hence,
$$
d_i(\widetilde{w}) = [A_{1n}^{u_1}, A_{2n}^{u_2}, \ldots, A_{n-1,n}^{u_{n-1}}] = w ~\mbox{for all}~i = 1, 2, \ldots,
n.
$$
On the other hand
$$
d_{n+1}(\widetilde{w}) = w
$$
and we see, that $\widetilde{w}$ lies in $\mathfrak{H}_{n+1}.$

If $w = w_1 w_2 \ldots w_k \in {\rm Brun}_n$ is a product of generators of the type (\ref{eq:gen}) or their inverses,  then
$$
\widetilde{w} = w_1 w_2 \ldots w_k (w_1 w_2 \ldots w_k)^{f_1} \ldots (w_1 w_2 \ldots w_k)^{f_n}
$$
and as in the previous case we can check that $d_i(\widetilde{w}) = w$ for all $i = 1, 2, \ldots, n+1.$
\end{proof}
Let us consider the following

\noindent
{\bf Question.} Let $w = w_1 w_2 \ldots w_k \in {\rm Brun}_n$ is the product of generators and their inverses
from $GB_n$. We can find $\widetilde{w_j}$ for all $j$ and take their product. We will have
$$
\widetilde{w_1} \widetilde{w_2} \ldots \widetilde{w_k} = (w_1 w_1^{f_1} \ldots w_1^{f_n})
(w_2 w_2^{f_1} \ldots w_2^{f_n})
\ldots (w_k w_k^{f_1} \ldots w_k^{f_n}).
$$
Is it true that
$$
\widetilde{w} = \widetilde{w_1} \widetilde{w_2} \ldots \widetilde{w_k} u
$$
for some Brunnian braid $u \in {\rm Brun}_{n+1}?$

\medskip

We know that ${\rm Brun}_m$ is generated by the commutators
$$
w = [A_{i_1,m}^{u_1}, A_{i_2,m}^{u_2}, \ldots, A_{i_{m-1},m}^{u_{m-1}}],~~\mbox{where all}~u_i \in U_m
$$
and $\{ i_1, i_2, \ldots, i_{m-1} \} = {\bf m-1}.$

To generalize the previous construction and to construct a lifting of ${\rm Brun}_m,$ 
$3 \leq m < n+1$
to
$\mathfrak{H}_{n+1}$
let us define a map
$$
T_{m,n+1} : {\rm Brun}_m \longrightarrow \mathfrak{H}_{n+1}
$$
by the formula
$$
T_{m,n+1}(\alpha) = \prod_{k=m}^{n+1} \tau_{m,k}(\alpha),~~\alpha \in {\rm Brun}_m,
$$
where
$$
\tau_{m,k} : {\rm Brun}_m \longrightarrow U_k'
$$
and
$$
\tau_{m,m}(\alpha) = \alpha,~~~
$$
$$
\tau_{m,k}(\alpha) = \prod_{1 \leq i_1 < i_2 < \ldots < i_{k-m} \leq k-1}
\alpha^{f_{i_1} f_{i_2}\ldots f_{i_{k-m}}},~~
k > m,
$$
with the lexicographic order from the right.
In particular, in this language, the lifting
$$
\widetilde{ } : {\rm Brun}_n \longrightarrow \mathfrak{H}_{n+1}
$$
that was constructed above has the form
$$
\widetilde{w} = \tau_{n,n}(w) \tau_{n,n+1}(w) = w \tau_{n,n+1}.
$$

\noindent
{\bf Examples.} 
\smallskip

\noindent
1) The map
$$
\tau_{3,4} : {\rm Brun}_3 \longrightarrow U_4'
$$
is defined by the rule
$$
\tau_{3,4}(\alpha) = \alpha^{f_1} \alpha^{f_2} \alpha^{f_3},~~~\alpha \in {\rm Brun}_3.
$$

\noindent
2) The map
$$
\tau_{3,5} : {\rm Brun}_3 \longrightarrow U_5'
$$
is defined by the rule
$$
\tau_{3,5}(\alpha) = \alpha^{f_1 f_2} \alpha^{f_1 f_3} \alpha^{f_1 f_4}
\alpha^{f_2 f_3} \alpha^{f_2 f_4} \alpha^{f_3 f_4},~~~\alpha \in {\rm Brun}_3.
$$

These constructions give the proof of the main result of this section.

\begin{thm}
The group $\mathfrak{H}_n,$ $n \geq 3$, is generated by the set
$$
\Delta_n, T_{3,n}(GB_3), T_{4,n}(GB_4), \ldots, GB_n.
 $$ \hfill $\Box$
\end{thm}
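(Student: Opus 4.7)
The plan is to argue by induction on $n$, using the short exact sequence
$$1\to \Brun_n \to \mathfrak{H}_n \xrightarrow{d_1} \mathfrak{H}_{n-1}\to 1$$
from Corollary~\ref{corollary6.5} (applied to $M = D^2$). The base cases $n=3$ and $n=4$ were treated in Subsections~3.2 and~3.3 respectively, where $\mathfrak{H}_3$ is seen to be generated by $\Delta_3^2$ together with $GB_3=\{[A_{13}^u,A_{23}^v]\}$, and $\mathfrak{H}_4$ by $\Delta_4$, $\widetilde{\alpha}_{lm}$, and $GB_4$.

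For the inductive step, given $\beta\in\mathfrak{H}_n$, the strategy is to produce an explicit element $\beta'$ in the subgroup generated by the proposed generating set with $d_1(\beta)=d_1(\beta')$, so that $\beta\beta'^{\,-1}\in\ker d_1 = \Brun_n$ is a product of elements of $GB_n$. To build $\beta'$, I would apply the induction hypothesis to write $d_1(\beta)\in\mathfrak{H}_{n-1}$ as a word in $\Delta_{n-1}$, $T_{3,n-1}(GB_3),\ldots,T_{n-2,n-1}(GB_{n-2})$ and $GB_{n-1}$, and then replace each occurrence as follows: $\Delta_{n-1}$ is replaced by $\Delta_n$ (and similarly for the central power $\Delta_{n-1}^2$ if one prefers to stay in the pure subgroup); each $T_{m,n-1}(w)$ with $w\in GB_m$ is replaced by $T_{m,n}(w)$; and each $w\in GB_{n-1}\subseteq \Brun_{n-1}$ is replaced by its lift $\widetilde{w}=T_{n-1,n}(w)$, whose existence and compatibility with $d_1$ is exactly the content of the lifting Lemma proved above.

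The central technical step — and the only real obstacle — is verifying the compatibility
$$d_1\circ T_{m,n} \;\equiv\; T_{m,n-1}\pmod{\Brun_{n-1}}$$
on $\Brun_m$ for all $3\le m<n$. Unwinding the recursive definition, one has
$T_{m,n}(\alpha)= T_{m,n-1}(\alpha)\cdot\tau_{m,n}(\alpha)$,
so this reduces to analysing $d_1(\tau_{m,n}(\alpha))$. Because $\tau_{m,n}(\alpha)$ is the product (with lexicographic order from the right) of the $\alpha^{f_{i_1}\cdots f_{i_{n-m}}}$ over $1\le i_1<\cdots<i_{n-m}\le n-1$, the bi-$\Delta$-identities $d_j\,d^i=d^{i-1}d_j$ (for $j<i$) and $d_j\,d^j=\id$ imply that $d_1$ kills every factor whose index sequence begins with $i_1=1$ and reindexes the remaining factors precisely into the form of $\tau_{m,n-1}(\alpha)$. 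An error term lying in $\Brun_{n-1}$ may arise from commuting factors past the killed ones, but such a term is a product of elements of $GB_{n-1}\subseteq GB_n$-conjugates, which is absorbed into the $\Brun_n$ quotient at the next stage.

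Granting this compatibility, the element $\beta'$ constructed above satisfies $d_1(\beta')=d_1(\beta)$, so $\beta\beta'^{-1}\in\Brun_n$ is by definition a product of elements of $GB_n$, and thus $\beta$ itself lies in the subgroup generated by $\Delta_n$, $T_{3,n}(GB_3),\ldots,T_{n-1,n}(GB_{n-1}),GB_n$. The induction is complete. The main step that needs care is the face-coface bookkeeping in the compatibility of $d_1$ with $T_{m,n}$; everything else is a direct application of the already-established exact sequence and the explicit lifting from the preceding Lemma.
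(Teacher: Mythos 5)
Your proposal is correct and follows essentially the same route as the paper: induction along the exact sequence $1\to \Brun_n\to\mathfrak{H}_n\xrightarrow{d_1}\mathfrak{H}_{n-1}\to 1$ of Corollary~\ref{corollary6.5}, with the maps $T_{m,n}$ supplying explicit lifts of the generators of $\mathfrak{H}_{n-1}$ and $GB_n$ accounting for the kernel (the paper leaves exactly this assembly implicit, saying only that ``these constructions give the proof''). One small remark: the compatibility $d_1\circ T_{m,n}=T_{m,n-1}$ holds on the nose, by the same face--coface computation as in the lifting Lemma (compare the identity $d_iH_{k,n}=H_{k,n-1}$ for the James--Hopf operation in Subsection~\ref{JHm}), so your ``error term in $\Brun_{n-1}$'' hedge is unnecessary --- and just as well, since as stated that error would obstruct $\beta\beta'^{-1}\in\Brun_n$ rather than be absorbed by it, and would instead have to be corrected by a further $T_{n-1,n}$-lift.
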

{\bf Remarks.} 1) The action of the maps $f_i$ on the generators $A_{kl}$ is the same as the actions
of the homomorphisms $d^i : B_n \longrightarrow B_{n+1}$ that was defined in the beginning of section 2.

2) The lifting $T_{m,n+1}$ is the similar to the James-Hopf operation $H_{m,n+1}$
(see subsection~\ref{JHm}).  The main difference is
that we consider the normal form of the element $w \in P_n$ of the type $w = u_2 u_3 \ldots u_n,$ $u_i \in U_i$,
but in the case of the James-Hopf operation one uses the normal form of the type $w = u_n u_{n-1} \ldots u_2,$
$u_i \in U_i$.

\section{Cohen braids  of the sphere and of the projective plane}
\subsection{Cohen braids of the sphere}
For braids on three strands we  have: $\mathfrak{H}_3(S^2)= \Brun_3(S^2)=P_3(S^2)=\Z/2$, the map
$\mathfrak{H}_4(S^2)\buildrel d_1\over\longrightarrow \mathfrak{H}_{3}(S^2)$
is obviously onto and $\Delta_3^2$ is central in  $\mathfrak{H}_4(S^2)$, so we have
$$\mathfrak{H}_4(S^2)\cong\Z/2\times\Brun_4(S^2).$$
\subsection{Cohen braids of the projective plane}
Let us consider the case of $\RP^2$ for very small number of strands.
Group $B_n(\mathbb{R}P^2)$ is generated by elements
$$
\sigma_1, \sigma_2, \ldots, \sigma_{n-1}, \rho
$$
and is defined by the braid relations of $B_n$ among $
\sigma_1, \sigma_2, \ldots, \sigma_{n-1}, $ and by the following relations: \\

$\rho \sigma_i = \sigma_{i} \rho,$ $i \not= 1;$\\

$\sigma_1^{-1} \rho \sigma_1^{-1} \rho = \rho \sigma_{1}^{-1} \rho \sigma_1;$\\

$\rho^2 =  \sigma_1 \sigma_{2} \ldots \sigma_{n-2} \sigma_{n-1}^2 \sigma_{n-2} \ldots \sigma_{2} \sigma_1.$\\

\noindent
There exists the canonical homomorphism
$$
\tau : B_n(\mathbb{R}P^2) \longrightarrow \Sigma_n,
$$
where $\tau(\sigma_i) = (i,i+1),$ $\tau(\rho) = e.$ The kernel of $\tau$ is the 
pure braid group
$P_n(\mathbb{R}P^2).$
If $n=1$ then $B_1(\mathbb{R}P^2) = P_1(\mathbb{R}P^2) = \pi_1(\mathbb{R}P^2) = \mathbb{Z} / 2.$
If $n=2$ then
$$
B_2(\mathbb{R}P^2) = \langle \sigma_{1}, \rho ~||~ \sigma_1^{-1} \rho 
\sigma_1^{-1} \rho =
\rho \sigma_{1}^{-1} \rho \sigma_1,   \rho^2 = \sigma_1^2 \rangle
$$
has order 16 and $P_2(\mathbb{R}P^2)$ is the  quaternion group of order 8, which has a presentation
$$
P_2(\mathbb{R}P^2) = \langle u, \rho ~||~  \rho u \rho^{-1} = u^{-1},   \rho^2 = u^2 \rangle,
$$
where $u = \sigma_1 \rho \sigma_1^{-1}.$

\noindent
If $n=3$ then
\begin{multline*}
B_3(\mathbb{R}P^2) = 
\langle \sigma_{1},  \sigma_{2}, \rho ~||~ \sigma_1 \sigma_2 \sigma_1  = \sigma_2 \sigma_1
 \sigma_2, ~~\rho \sigma_2 = \sigma_2 \rho,~~\\
\sigma_1^{-1} \rho \sigma_1^{-1} \rho =
\rho \sigma_{1}^{-1} \rho \sigma_1,  ~~ \rho^2 = \sigma_1 \sigma_2^2 \sigma_1 \rangle
\end{multline*}
and $P_3(\mathbb{R}P^2)$ is generated by elements
$$
\rho, u = \sigma_1 \rho \sigma_1^{-1}, w = \sigma_2 \sigma_1 \rho \sigma_1^{-1} \sigma_2^{-1},
A_{12} = \sigma_1^2, A_{23} = \sigma_2^2,  A_{13} = \sigma_2 \sigma_1^2 \sigma_2^{-1}
$$
(see \cite{BMVW}).
The subgroup
$$
U_3(\mathbb{R}P^2) = \langle w, A_{13}, A_{23} ~||~  A_{13} A_{23} = w^2 \rangle
$$
is normal in $P_3(\mathbb{R}P^2)$, free of rank 2 and
$$
P_3(\mathbb{R}P^2) / U_3(\mathbb{R}P^2) \cong P_2(\mathbb{R}P^2).
$$
It was proved in 
\cite[Subsection~3.1]{BMVW} that $\Brun_2(\RP^2)$ is the normal closure of the element 
$A_{1,2}$ in $B_2(\RP^2)$.
Consider the commutative diagram of fiber sequences \cite[page 1618]{BMVW}
\begin{equation} 
\begin{diagram}
\RP^2\smallsetminus\{p_1\}&\rInto^{i_2}&F(\RP^2,2)&\rTo^{\delta^{(2)}}&\RP^2\\
\dInto>{i}&&\dTo>{\delta^{(1)}}&&\dTo\\
\RP^2&\rEq&\RP^2&\rTo&\ast,\\
\end{diagram}
\label{drp2}
\end{equation}
where $i_2(x)=(p_1,x)$ and $i_1(x)=(x,p_2)$.
From the upper row, there is an exact sequence
\begin{multline}
\pi_2(\RP^2)\rTo \pi_1(\RP^2\smallsetminus \{p_1\})\rTo^{i_{2*}} \pi_1(F(\RP^2,2))=\\
P_2(\RP^2)\rOnto^{d_2}
 \pi_1(\RP^2)=P_1(\RP^2)=\Z/2.
 \label{ex_se_pi}
\end{multline}
As the group $P_2(\RP^2)$ is the quaternion group of order 8, the first map in (\ref{ex_se_pi})
is the multiplication by 4.
Note that
\begin{multline*}
\Brun_2(\RP^2)=\\
\Ker(d_1\colon P_2(\RP^2)\to P_1(\RP^2))\cap \Ker(d_2\colon P_2(\RP^2)\to P_1(\RP^2))=\\
\Ker(d_1\colon \Ker(d_2\colon P_2(\RP^2)\to P_1(\RP^2))\to P_1(\RP^2)).
\end{multline*}
We see from diagram (\ref{drp2}) that the map 
\begin{equation*}
d_1\colon \Ker(d_2\colon P_2(\RP^2)\to P_1(\RP^2))\to P_1(\RP^2)
\end{equation*}
is onto, so  
$\Brun_2(\RP^2)$ has order 2.
Consider the following commutative diagram
\begin{equation}
\begin{diagram}
\Brun_{2}(\RP^2)&\rInto &\mathfrak{H}_{2}(\RP^2)&\rTo^{d_1}&\Z/2\\
\dTo&&\dTo&&\dTo\,\Delta\\
\Brun_{2}(\RP^2)&\rInto &P_2(\RP^2)&\rTo^{(d_1, d_2)}&\Z/2\times\Z/2,\\
\end{diagram}
\label{d2rp2}
\end{equation}
As $\Brun_2(\RP^2)$ has order 2, the map $(d_1,d_2)$ should be onto.
Then by definition of $\mathfrak{H}_{2}(\RP^2)$ the map $d_1$ in (\ref{d2rp2})
should be onto.
Consider the exact sequence
\begin{equation*}
1\to\Brun_2(\RP^2)\to \mathfrak{H}_2(\RP^2)\buildrel{d_1}\over\longrightarrow
 P_1(\RP^2)\to 1.
\end{equation*} 
It follows from this sequence that $ \mathfrak{H}_2(\RP^2)$ has order 4 and as it 
is a subgroup of 
the quaternion group it should be $\Z/4$. 
\begin{lem}
The element $\rho u w$ is a Cohen in $P_3(\mathbb{R}P^2).$
\end{lem}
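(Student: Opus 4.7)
The plan is to recognize $\rho u w$ as a specialization of the general Cohen-braid construction given in the unnumbered Proposition immediately preceding this Lemma. That Proposition asserts that for any connected surface $M$, any $g \in \pi_1(M,p_1)$, and any $n\geq 2$, the product $h = g_1 g_2 \cdots g_n$ lies in $\mathfrak{H}_n(M)$, where $g_1 = g$ and $g_{i+1} = \sigma_i g_i \sigma_i^{-1}$.

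First I would specialize to $M = \RP^2$, $n=3$, and $g = \rho \in P_1(\RP^2) = \pi_1(\RP^2,p_1)$. Then the inductive formula produces exactly the elements named in the paper: $g_1 = \rho$, $g_2 = \sigma_1 \rho \sigma_1^{-1} = u$, and $g_3 = \sigma_2 g_2 \sigma_2^{-1} = \sigma_2 \sigma_1 \rho \sigma_1^{-1} \sigma_2^{-1} = w$. Hence $\rho u w = g_1 g_2 g_3 = h$, and the conclusion follows from the cited Proposition.

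For a self-contained verification, one would instead compute $d_i(\rho u w)$ directly. Since $\rho$ is pure and conjugation by a permutation-preserving element stays pure, each of $\rho$, $u$, $w$ is a pure braid, so $d_i$ distributes across the product by Proposition~\ref{proposition6.2} (or simply by the fact that face maps restrict to group homomorphisms on pure braid groups). Geometrically $g_j$ is a $\pi_1(\RP^2)$-loop supported on the $j$-th strand, so $d_j(g_j) = 1$, while $d_i(g_j)$ for $i\neq j$ equals the corresponding loop on the $j$-th strand after the deletion has been relabeled. A short case check in each of $i=1,2,3$ then shows
$$
d_1(\rho u w) = d_2(\rho u w) = d_3(\rho u w) = \rho\, u \in P_2(\RP^2),
$$
which gives $\rho u w \in \mathfrak{H}_3(\RP^2)$ directly.

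The only step requiring care is the computation of the face maps on $u$ and $w$: one must confirm that conjugation by $\sigma_1$ (respectively by $\sigma_2\sigma_1$) really does transport the loop $\rho$ onto the $2$nd (respectively $3$rd) strand in a way that is compatible with the $d_i$. This is routine from the geometric description of the face operators but is the step where bookkeeping matters. Once it is done, either argument concludes that $\rho u w$ is a Cohen braid in $P_3(\RP^2)$.
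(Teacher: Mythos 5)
Your proposal is correct and its ``self-contained verification'' is exactly the paper's proof, which consists of the single computation $d_1(\rho u w)=d_2(\rho u w)=d_3(\rho u w)=\rho u$. Your additional observation that $\rho u w$ is the case $M=\RP^2$, $n=3$, $g=\rho$ of the general construction $h=g_1g_2\cdots g_n$ from the preceding unnumbered Proposition (since $g_2=\sigma_1\rho\sigma_1^{-1}=u$ and $g_3=\sigma_2 u\sigma_2^{-1}=w$) is accurate and arguably more illuminating, though the paper does not make that link explicit in its proof.
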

\begin{proof}
Indeed,
$$
d_1(\rho u w) = d_2(\rho u w) = d_3(\rho u w) = \rho u.
$$
\end{proof}
It was proved in \cite{BMVW} that ${\rm Brun}_3(\mathbb{R}P^2) \subset U_3(\mathbb{R}P^2)$ and it is a free subgroup
of finite rank with the set of generators
$$
w^4,~~A_{23}^2,~~[w^4, A_{23}],~~[A_{23}, w],~~[A_{23}, w^2],~~[A_{23}, w^3],
$$
$$
[[A_{23}, w], A_{23}],~~[[A_{23}, w^2], A_{23}]~~[A_{23}, w^3], A_{23}].
$$


\begin{prop}
Group $\mathfrak{H}_2(\mathbb{R}P^2)$ (as a cyclic of order 4) has a generator $\rho u.$
\end{prop}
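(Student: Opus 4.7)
The strategy is to exploit the work already done just before the proposition: the short exact sequence
\begin{equation*}
1\to\Brun_2(\RP^2)\to \mathfrak{H}_2(\RP^2)\buildrel{d_1}\over\longrightarrow P_1(\RP^2)\to 1
\end{equation*}
has been established, and $\mathfrak{H}_2(\RP^2)$ has been identified as $\Z/4$ (the unique cyclic subgroup of order $4$ in the quaternion group $P_2(\RP^2)$). Given this, to show that $\rho u$ is a generator it suffices to verify two things: (a) that $\rho u$ actually lies in $\mathfrak{H}_2(\RP^2)$, and (b) that $\rho u$ is not contained in the index-$2$ subgroup $\Brun_2(\RP^2)$, equivalently that its image under $d_1$ is the nontrivial element of $P_1(\RP^2)=\Z/2$.

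For step (a), I would interpret $\rho$ and $u=\sigma_1\rho\sigma_1^{-1}$ geometrically: $\rho$ is the braid in which the first strand traverses the nontrivial loop of $\RP^2$ while the other strand remains trivial, and conjugation by $\sigma_1$ transfers this loop to the second strand, so $u$ is the corresponding braid on the second strand. From this description,
\begin{equation*}
d_1(\rho)=e,\qquad d_2(\rho)=\rho,\qquad d_1(u)=\rho,\qquad d_2(u)=e,
\end{equation*}
where $\rho$ on the right denotes the generator of $P_1(\RP^2)=\Z/2$. Consequently $d_1(\rho u)=d_2(\rho u)=\rho$, which proves $\rho u\in\mathfrak{H}_2(\RP^2)$ and simultaneously shows that its image under $d_1$ is the nontrivial element of $P_1(\RP^2)$.

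For step (b), the computation above already gives $d_1(\rho u)\neq e$, so $\rho u$ is not in $\Brun_2(\RP^2)$. In the cyclic group $\Z/4$ every element outside the unique subgroup of order $2$ is a generator, so $\rho u$ generates $\mathfrak{H}_2(\RP^2)$. As an independent check one can use the presentation $P_2(\RP^2)=\la u,\rho\mid \rho u\rho^{-1}=u^{-1},\ \rho^2=u^2\ra$ to compute directly $(\rho u)^2=\rho(u\rho)u=\rho(\rho u^{-1})u=\rho^2=u^2$, so $(\rho u)^4=u^4=(\rho^2)^2=e$ while $(\rho u)^2\neq e$, confirming that $\rho u$ has order exactly $4$.

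The only real subtlety is step (a), namely giving a precise argument for the values of $d_1$ and $d_2$ on the generators $\rho$ and $u$. This is not a computation internal to the presentation but a statement about the strand-forgetting maps on the underlying configuration space, so I would justify it by appealing to the geometric description of $\rho$ from \cite{BMVW} (where $\rho$ is defined as a loop localized on the first strand) together with the naturality of $d_i$ under conjugation by $\sigma_1$, which exchanges the roles of the first two strands and hence interchanges the roles of $d_1$ and $d_2$ when passing from $\rho$ to $u$.
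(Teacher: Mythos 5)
Your proof is correct, and it reaches the conclusion by a somewhat different organization than the paper. The paper's proof is a brute-force enumeration: it lists all eight elements $\rho^{\varepsilon}u^{\mu}$ of the quaternion group $P_2(\RP^2)$, checks (without writing out the computation) which of them are Cohen, and observes that the Cohen ones are exactly $\{e,u^2,\rho u,\rho u^3\}=\langle\rho u\rangle$; the fact that $\rho u$ itself is Cohen is imported from the preceding lemma, where $\rho u w$ is shown to lie in $\mathfrak{H}_3(\RP^2)$ with $d_i(\rho u w)=\rho u$, so that $\rho u\in\mathfrak{H}_2(\RP^2)$ by Proposition~\ref{proposition6.2}. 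You instead lean on the structural work done just before the proposition --- the exact sequence $1\to\Brun_2(\RP^2)\to\mathfrak{H}_2(\RP^2)\to P_1(\RP^2)\to 1$ and the identification $\mathfrak{H}_2(\RP^2)\cong\Z/4$ --- which reduces the problem to the single verification that $d_1(\rho u)=d_2(\rho u)$ is the nontrivial element of $\Z/2$. Your explicit values $d_1(\rho)=e$, $d_2(\rho)=\rho$, $d_1(u)=\rho$, $d_2(u)=e$ are consistent with the paper's conventions ($\rho$ commutes with $\sigma_i$ for $i\neq 1$, so it is localized on the first strand, and $u=\sigma_1\rho\sigma_1^{-1}$ transfers the loop to the second strand, exactly as in the paper's construction $g_2=\sigma_1 g_1\sigma_1^{-1}$), and since $d_i$ restricted to pure braids is a homomorphism these values determine $d_i$ on all of $P_2(\RP^2)$ --- in fact they recover the paper's entire enumeration as a corollary. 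Your version is more self-contained and makes explicit the computation the paper hides behind ``we can check''; the paper's version has the small advantage of not needing any geometric input beyond the earlier lemma on $\rho u w$. The order computation $(\rho u)^2=\rho^2=u^2\neq e$, $(\rho u)^4=e$ is a correct, if redundant, cross-check.
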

\begin{proof}
In $P_2(\mathbb{R}P^2)$ element $u$ has order 4 and every element has the form
$$
\rho^{\varepsilon} u^{\mu},~~~\varepsilon = 0, 1,~~\mu = 0, 1, 2, 3, 4.
$$
Let $\varepsilon = 0$, we can check that $u$ and $u^3$ are not Cohen but $u^2 = \rho^2$ is.
Let $\varepsilon = 1.$ Since $\rho u$ is Cohen then the elements
$$
(\rho u)^2 = u^2,~~(\rho u)^3 = \rho u^3
$$
are Cohen. The element $\rho u^2$ is not Cohen.
\end{proof}
To study the group $\mathfrak{H}_3(\mathbb{R}P^2)$ let us construct a lifting of
 elements of
$\mathfrak{H}_2(\mathbb{R}P^2)$ to $\mathfrak{H}_3(\mathbb{R}P^2).$ A lifting of $\rho u$ is equal
to $\rho u w.$ However, the elements $\rho$ and $u$ in $P_3(\mathbb{R}P^2)$ are not 
the same as elements
$\rho$ and $u$ in $P_2(\mathbb{R}P^2).$
To simplify  the calculations let us define the elements $a = \rho w$ and $b = w u.$ 
Then the group
$\langle a, b\rangle$ is isomorphic to $P_2(\mathbb{R}P^2)$  \cite{BMVW}. 
We find the powers of $\rho u w$:

$\rho u w = a b (w^{-1} A_{23})^2 w,$

$(\rho u w)^2 =  b^2  A_{23}^{-1} w A_{23}^{-1} (w^{-1} A_{23})^2 w,$

$(\rho u w)^3 = a b^3  (w^{-1} A_{23})^{2} (w A_{23}^{-1})^2 (w^{-1} A_{23})^2 w,$

$(\rho u w)^4 = A_{23}^{-1} w A_{23}^{-1} (w^{-1} A_{23})^{2} (w A_{23}^{-1})^2 (w^{-1} A_{23})^2 w,$

$.......................................................................................$
\medskip

\noindent
In particular, the element $\rho u w$ has infinite order in $P_3(\mathbb{R}P^2)$ but the element $\rho u$ has order 4 in $P_2(\mathbb{R}P^2)$.
\begin{lem}
The group $\mathfrak{H}_3(\mathbb{R}P^2)$ is generated by $\rho u w$ and the elements
$$
w^4,~~A_{23}^2,~~[w^4, A_{23}],~~[A_{23}, w],~~[A_{23}, w^2],~~[A_{23}, w^3],
$$
$$
[[A_{23}, w], A_{23}],~~[[A_{23}, w^2], A_{23}]~~[A_{23}, w^3], A_{23}].
$$
In particular, $\mathfrak{H}_3(\mathbb{R}P^2)$ is finitely generated.
\hfill $\Box$
\end{lem}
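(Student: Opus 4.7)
The plan is to reduce the statement to the preceding results via the left exact sequence
$$
1 \to \Brun_3(\mathbb{R}P^2) \rInto \mathfrak{H}_3(\mathbb{R}P^2) \rTo^{d_1} \mathfrak{H}_2(\mathbb{R}P^2),
$$
which is available from Proposition~\ref{proposition6.2} with no restriction on the surface. The point is that, although Proposition~\ref{proposition6.4} fails to apply for $\mathbb{R}P^2$, we can still establish surjectivity of $d_1$ in this particular low-dimensional case by exhibiting an explicit lift.

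First, I would observe that $\mathfrak{H}_2(\mathbb{R}P^2) \cong \Z/4$ is generated by $\rho u$, as shown in the preceding proposition, and that the immediately preceding lemma gives $d_1(\rho u w) = \rho u$. Therefore $\rho u w \in \mathfrak{H}_3(\mathbb{R}P^2)$ provides a lift of a generator of $\mathfrak{H}_2(\mathbb{R}P^2)$, which upgrades the left exact sequence above to a genuine short exact sequence
$$
1 \to \Brun_3(\mathbb{R}P^2) \to \mathfrak{H}_3(\mathbb{R}P^2) \rTo^{d_1} \mathfrak{H}_2(\mathbb{R}P^2) \to 1.
$$

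Next, by a standard group-theoretic argument, any short exact sequence of this form yields that $\mathfrak{H}_3(\mathbb{R}P^2)$ is generated by a set of preimages of generators of $\mathfrak{H}_2(\mathbb{R}P^2)$ together with any set of generators of $\Brun_3(\mathbb{R}P^2)$. The former is supplied by the single element $\rho u w$, and the latter is supplied by the set
$$
w^4,\ A_{23}^2,\ [w^4,A_{23}],\ [A_{23},w],\ [A_{23},w^2],\ [A_{23},w^3],
$$
$$
[[A_{23},w],A_{23}],\ [[A_{23},w^2],A_{23}],\ [[A_{23},w^3],A_{23}],
$$
which is the finite generating set for $\Brun_3(\mathbb{R}P^2)$ recorded earlier in this section from~\cite{BMVW}. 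Combining these gives the claimed generating set, and finite generation is immediate.

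The only place where something could go wrong is the surjectivity of $d_1$, since the general argument in Proposition~\ref{proposition6.4} explicitly excludes $\mathbb{R}P^2$. However, the preceding lemma sidesteps this difficulty entirely by giving an explicit preimage of the generator of the target $\Z/4$. With surjectivity in hand the rest is essentially bookkeeping, and I do not expect any further obstacle.
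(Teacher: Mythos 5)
Your proposal is correct and follows exactly the route the paper intends: the lemma is stated with no written proof precisely because the preceding results (the computation $d_1(\rho u w)=\rho u$, the identification $\mathfrak{H}_2(\mathbb{R}P^2)=\langle\rho u\rangle\cong\Z/4$, and the generating set of $\Brun_3(\mathbb{R}P^2)$ from \cite{BMVW}) assemble into the short exact sequence argument you describe. No gaps.
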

Note that the similar fact in $B_3$ does not hold. Indeed, as we know
$\mathfrak{H}_3 = Z(B_3) \times U_3'$ and $U_3'$ is not finitely generated.

\medskip
\noindent 
{\bf Question}. Whether $\mathfrak{H}_4(\mathbb{R}P^2)$ is finitely generated or not?

\subsection{Map from Cohen braids of the disc to that of the sphere}\label{JHm}
This study is motivated by the facts that for the braids and pure braids these maps are epimorphisms and for the Brunnian braids the following exact sequence was proved in \cite{BCWW}
\begin{equation*}
1\to \Brun_{n+1}(S^2) \to \Brun_{n}(D^2) \to \Brun_{n}(S^2) \to 
\pi_{n-1}(S^2)\to 1.
\label{eq:maks2}
\end{equation*} 
Let $M$ be a connected $2$-manifold with nonempty boundary. For $n\geq k$, the \textit{James-Hopf operation} is defined as a map $$H_{k,n}\colon \Brun_k(M)\rTo \mathfrak{H}_n(M)$$
by setting $H_{k,k}(\beta)=\beta$ with
\begin{equation*}\label{equation6.4}
H_{k,n}(\beta)=\prod_{1\leq i_1<i_2<\cdots<i_{n-k}\leq n}d^{i_{n-k}}d^{i_{n-k-1}}\cdots d^{i_1}(\beta)
\end{equation*}
with lexicographic order from right for $\beta\in \Brun_k(M)$. The map $H_{k,n}$ satisfies the property that
$$
d_iH_{k,n}(\beta) =H_{k,n-1}(\beta)
$$
for $1\leq i\leq n$, $k< n$ and $\beta\in \Brun_k(M)$. Thus the sequence of elements $\{H_{k,n}(\beta)\}$ lifts to $\mathfrak{H}(M)$ that defines a map
$$
H_{k,\infty}\colon \Brun_k(M)\rTo \mathfrak{H}(M).
$$
According to~\cite[Theorem 3.4]{Wu4}, we have the following proposition.
\begin{prop}\label{proposition6.5}
Let $M$ be a connected $2$-manifold with nonempty boundary and let $\alpha\in\mathfrak{H}_n(M)$ with $1\leq n\leq \infty$. Then there exists an unique element $\delta_k(\alpha)\in \Brun_k(M)$ for $1\leq k\leq n$ such that the equality
    $$
    \alpha=\prod_{k=1}^n H_{k,n}(\delta_k(\alpha))
    $$
holds.\hfill $\Box$
\end{prop}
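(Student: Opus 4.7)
The plan is to induct on $n$, using as the main tool the identity $d_iH_{k,n}(\beta)=H_{k,n-1}(\beta)$ for $1\le i\le n$ and $\beta\in\Brun_k(M)$ stated just before the proposition. This identity simultaneously shows that $H_{k,n}(\beta)\in\mathfrak{H}_n(M)$ (all faces agree) and tells us exactly how the James--Hopf elements transform under the face $d_1$ of Proposition~\ref{proposition6.2}.

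The base case $n=1$ is immediate: $\Brun_1(M)=\mathfrak{H}_1(M)=P_1(M)$ since there is only one face map and no constraint to impose, $H_{1,1}$ is the identity, and one must take $\delta_1(\alpha)=\alpha$. For the inductive step, given $\alpha\in\mathfrak{H}_n(M)$ with $n\ge 2$, Proposition~\ref{proposition6.2} yields $d_1(\alpha)\in\mathfrak{H}_{n-1}(M)$, and the induction hypothesis provides unique $\delta_k\in\Brun_k(M)$, $1\le k\le n-1$, with $d_1(\alpha)=\prod_{k=1}^{n-1}H_{k,n-1}(\delta_k)$. Put $\beta=\prod_{k=1}^{n-1}H_{k,n}(\delta_k)\in\mathfrak{H}_n(M)$. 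The key identity gives $d_1\beta=d_1\alpha$, so $d_1(\alpha\beta^{-1})=1$. As $\alpha\beta^{-1}$ lies in the group $\mathfrak{H}_n(M)$ in which all $d_i$ coincide, this forces $d_i(\alpha\beta^{-1})=1$ for every $i$, i.e.\ $\alpha\beta^{-1}\in\Brun_n(M)$. Setting $\delta_n(\alpha):=\alpha\beta^{-1}$ and $\delta_k(\alpha):=\delta_k$ for $k<n$ produces the desired factorization.

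Uniqueness is handled by applying $d_1$ to any such factorization: since $\delta_n\in\Brun_n(M)$ with $n\ge 2$ we have $d_1(\delta_n)=1$, so the key identity together with the homomorphism property of $d_1$ on pure braids reduces the factorization of $d_1(\alpha)$ to the one supplied by the inductive hypothesis, after which $\delta_n$ is forced to equal $\alpha\beta^{-1}$. The case $n=\infty$ then follows by passing to the inverse limit: the uniqueness statement at each finite level shows that the $\delta_k$ associated with the projections of $\alpha$ are compatible as the level varies, so they assemble into well-defined elements $\delta_k(\alpha)\in\Brun_k(M)$.

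I expect the principal obstacle to be verifying the key identity $d_iH_{k,n}(\beta)=H_{k,n-1}(\beta)$ itself. This is a careful application of the bi-$\Delta$ relations of Section~2: applying $d_i$ termwise to the product defining $H_{k,n}(\beta)$ and using the commutation rules for $d_jd^l$ allows one to push $d_i$ past the cofaces; whenever $i$ fails to match any of the inserted cofaces, the leftover $d_j$ lands on $\beta$ and kills the term by the Brunnian hypothesis, while the surviving contributions (those for which some $i_a=i$) reindex precisely to the $(n-1)$-level James--Hopf product $H_{k,n-1}(\beta)$.
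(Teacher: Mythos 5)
Your argument is correct, but note that the paper does not actually prove this proposition: it is quoted verbatim from \cite[Theorem 3.4]{Wu4} and stated with no proof, so what you have written is a self-contained reconstruction rather than a variant of an argument in the text. Your induction is the natural one and works: the decomposition $\alpha=\delta_n\cdot\beta$ with $\beta=\prod_{k=1}^{n-1}H_{k,n}(\delta_k)$ is legitimate because $d_1$ is a homomorphism on pure braids, $d_1\beta=d_1\alpha$ by the key identity, and $\alpha\beta^{-1}\in\mathfrak{H}_n(M)$ with trivial first face is automatically Brunnian; uniqueness and the passage to the inverse limit go through exactly as you say (for $n=\infty$ one should also observe that $H_{k,\infty}(\delta_k)$ projects to the identity at levels below $k$, so the infinite product is finite at each level). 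You correctly isolate the one load-bearing fact, $d_iH_{k,n}(\beta)=H_{k,n-1}(\beta)$ for Brunnian $\beta$, which the paper likewise asserts without proof just before the proposition; your sketch of it is right --- pushing $d_i$ through the cofaces via the bi-$\Delta$ relations, the terms whose index set contains $i$ reindex bijectively (and in the same lexicographic order) onto the terms of $H_{k,n-1}(\beta)$, while the remaining terms acquire a face landing on $\beta$ and vanish by Brunnian-ness. Two cosmetic points: the placement of the factor $\delta_n=H_{n,n}(\delta_n)$ at the front versus the end of the product depends on the ordering convention in $\prod_{k=1}^n$, so you should fix a convention and define $\delta_n$ accordingly (e.g.\ $\delta_n=\beta^{-1}\alpha$ if it sits last); and the hypothesis $\partial M\neq\emptyset$ enters only in that the cofaces $d^i$, hence the maps $H_{k,n}$, require a boundary collar to be defined.
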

The coface homomorphism $d^i\colon B_n\to B_{n+1}$ is given by the formula \cite[Example 1.2.8]{Wu4}
\begin{equation*}\label{eq:3.1}
d^i\sigma_j=\left\{
\begin{array}{lcl}
\sigma_j&\textrm{ for }& j<i-1,\\
\sigma_i\sigma_{i-1}\sigma_i^{-1}& \textrm{ for } & j=i-1,\\
\sigma_{j+1}&\textrm{ for } &j>i-1.\\
\end{array}\right.
\end{equation*}
Remind that the generators for the pure braid group $P_n$ are given by
\begin{equation*}\label{eq:3.2}
A_{i,j}=\sigma_{j-1}\sigma_{j-2}\cdots\sigma_{i+1}\sigma_i^2\sigma_{i+1}^{-1}\cdots
\sigma_{j-2}^{-1}\sigma_{j-1}^{-1}
\end{equation*}
for $1\leq i<j\leq n$. Combining the above two formulae, we have 
\begin{equation*}\label{eq:3.3}
d^iA_{s,t}=\left\{
\begin{array}{lcl}
A_{s+1,t+1}&\textrm{ for }& i\leq s,\\
A_{s,t+1}&\textrm{ for }& s+1\leq i\leq t,\\
A_{s,t}&\textrm{ for } & i>t.\\
\end{array}\right.
\end{equation*}

\medskip

\noindent\textbf{Example.} Consider the James-Hopf operation 
$$H_{2,4}\colon \Brun_2(D^2)\to \mathfrak{H}_4(D^2).$$ 
From the definition,
$$
\begin{array}{cl}
&H_{2,4}(A_{1,2})=\\
=&d^2d^1(A_{1,2})d^3d^1(A_{1,2})d^3d^2(A_{1,2})d^4d^1(A_{1,2})
d^4d^2(A_{1,2})d^4d^3(A_{1,2})\\
=&d^2(A_{2,3})d^3(A_{2,3})d^3(A_{1,3})d^4(A_{2,3})d^4(A_{1,3})d^4(A_{1,2})\\
=&A_{3,4}A_{2,4}A_{1,4}A_{2,3}A_{1,3}A_{1,2}.\\
\end{array}
$$
Similarly $H_{2,4}(A_{1,2}^k)=A_{3,4}^kA_{2,4}^kA_{1,4}^k A_{2,3}^kA_{1,3}^kA_{1,2}^k$ for $k\in \Z$. 

\section{Generalizations of  Brunian and Cohen braids} 



Let $\beta$ be an element in the  braid group
$B_n(M)$, and let $\bf{n_1, n_2, \ldots,n_k}$ be non-empty subsets of ${\bf n} = \{ 1, 2, \ldots, n \}$ such that
${\bf n_i} \cap {\bf n_j} = \emptyset$ if $i \not= j.$ We will say that $\beta$ is a
$({\bf n_1},{\bf n_2}, \ldots {\bf n_k}$--{\it Cohen} braid if
$d_{i_1}(\beta) = d_{i_2}(\beta) = \ldots = d_{i_s}(\beta),$
where ${\bf n_i} = \{ i_1, i_2, \ldots, i_s \}$ for all $i = 1, 2, \ldots, k.$ In particular, ${\bf n}$--Cohen
braid in $B_n(M)$
is an element of $\mathfrak{H}^B_n(M)$.

If ${\bf m} = \{ i_1, i_2, \ldots, i_s \} \subseteq {\bf n}$ then ${\bf m}$--Cohen braid $\beta$ is called ${\bf m}$--{\it Brumnnian} braid
if $d_{i_1}(\beta) = d_{i_2}(\beta) = \ldots = d_{i_s}(\beta) = e \in B_{n-1}(M).$

If $\beta$ is a $({\bf n_1},{\bf n_2}, \ldots,{\bf n_k})$--Cohen braid then we denote
$d_{\bf n_i}(\beta) = d_{i_1}(\beta) = d_{i_2}(\beta) = \ldots = d_{i_s}(\beta),$ where
${\bf n_i} = \{ i_1, i_2, \ldots, i_s \}$ for all $i = 1, 2, \ldots, k.$  
Let $\alpha_1, \alpha_2, \ldots, \alpha_k$ be some braids in $B_{n-1}(M)$ and
$\bf{n_1, n_2, \ldots,n_k}$
be non-empty subsets of ${\bf n} = \{ 1, 2, \ldots, n \}$ such that
${\bf n_i} \cap {\bf n_j} = \emptyset$ if $i \not= j.$
We ask the following question.
Does there exists a braid $\beta \in B_n(M)$ such that it is a solution of the following system of equations
\begin{equation}
\begin{cases}
 d_{\bf n_1}(\beta)=\alpha_1, \\
\dots\\
 d_{\bf n_k}(\beta)=\alpha_k.
 \end{cases}\label{eq:sys4}
\end{equation}
This generalises the system of equations (\ref{eq:sys2}).

One of the motivations for the definition of ${\bf m}$--Brunnian braids comes from the work of Makanin \cite{M}. He
defined the set of unary braids in $B_n$ and the set of harmonic braids in $P_n$. A braid $\beta \in B_n$
is called {\it unary} if its string starting in $1$ ends in $n$ and if take away this string then the other $n-1$ strings of $\beta$ form
the trivial braid. In particular, an unary braid is $\{ 1 \}$--Brunnian braid by our definition. The closure of unary braid is a knot.
Makanin proved that passing to the closure establishes a bijection between isotopy classes of oriented knots and equivalence classes of unary braids generated by the Markov moves. A pure braid $\gamma \in P_n$ is called
{\it harmonic} if  its Markov normal form $\gamma = \gamma_2 \gamma_3 \ldots \gamma_n,$ $\gamma_i \in U_i,$ satisfies the  following property:
$$
d_1(\gamma_i) = \gamma_{i-1}
$$
for all  $i = 3, 4, \ldots, n$.
Harmonic braids form a subgroup $H_n$ of $P_n.$ The set of unary braids on $n$ strings is invariant under
conjugation by $H_n$ and unary braids conjugate in $B_n$ are conjugate by an element of $H_n$.
\begin{prop}
Any unary braid $\beta$ in $B_n$ has a form 
$$\beta = \beta_0 \sigma_1 \sigma_2 \ldots
 \sigma_{n-1},$$
where $\beta_0 \in \langle\langle A_{12}, A_{13}, \ldots, A_{1n}\rangle\rangle^{P_n}.$
\hfill $\Box$
\end{prop}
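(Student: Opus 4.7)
The plan is to observe that $\gamma := \sigma_1\sigma_2\cdots\sigma_{n-1}$ is itself unary: every crossing in $\gamma$ involves the first strand, so deleting that strand leaves no crossings, i.e. $d_1(\gamma)=e\in B_{n-1}$. The induced permutation $\pi_\gamma$ sends $1\mapsto n$ and $j\mapsto j-1$ for $j\geq 2$. I would next argue that any unary braid $\beta$ carries the same permutation: by definition strand $1$ ends at position $n$, and the triviality of $d_1(\beta)$ forces the remaining strands to end (after the canonical relabelling that the forgetting map uses) at positions $1,2,\ldots,n-1$ in their original cyclic order. Consequently $\beta_0:=\beta\gamma^{-1}$ is a pure braid, and this already yields the desired factorisation $\beta=\beta_0\,\sigma_1\sigma_2\cdots\sigma_{n-1}$.

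It then remains to check that $\beta_0\in\langle\langle A_{1,2},\ldots,A_{1,n}\rangle\rangle^{P_n}$. By the Fadell--Neuwirth splitting, the kernel of $d_1\colon P_n\to P_{n-1}$ is the free group on $A_{1,2},\ldots,A_{1,n}$; being the kernel of a homomorphism it is normal in $P_n$, and being generated as a group by these elements it coincides with their normal closure. Hence it suffices to verify $d_1(\beta_0)=e$. Using the crossed-homomorphism identity
$$d_i(\beta\eta)=d_i(\beta)\,d_{i\cdot\beta}(\eta),$$
which is recorded in~(\ref{equation6.2}) and holds for all $\beta,\eta\in B_n$, together with $1\cdot\beta=n$ for any unary $\beta$, I obtain
$$d_1(\beta_0)=d_1(\beta)\cdot d_n(\gamma^{-1})=e\cdot d_n(\gamma^{-1}).$$
Applying the same identity to the trivial product $\gamma\cdot\gamma^{-1}=e$, and noting that $n\cdot\gamma^{-1}=1$, gives $d_n(\gamma^{-1})=d_1(\gamma)^{-1}=e$. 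Therefore $d_1(\beta_0)=e$ and the conclusion follows.

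The only delicate point is the bookkeeping around the face maps $d_i$, which fail to be homomorphisms on $B_n$; one must track the permutation actions $1\cdot\beta=n$ and $n\cdot\gamma^{-1}=1$ carefully, and it is essential that the computation be performed on $\beta\gamma^{-1}$ (which is pure) rather than by naively distributing $d_1$ over a product of non-pure factors. Once the identification of $\ker d_1$ with the stated normal closure is in hand, the problem reduces to this short direct computation.
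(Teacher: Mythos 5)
Your proof is correct. The paper states this proposition without proof (it is closed with a box), and your argument supplies exactly the verification the authors evidently had in mind: $\sigma_1\cdots\sigma_{n-1}$ is itself unary, the permutation of any unary braid is forced to agree with it, so $\beta_0=\beta(\sigma_1\cdots\sigma_{n-1})^{-1}$ is pure, and the crossed-homomorphism identity $d_i(\beta\eta)=d_i(\beta)\,d_{i\cdot\beta}(\eta)$ together with the identification of $\ker\bigl(d_1\colon P_n\to P_{n-1}\bigr)$ with the free (hence normal, hence normal-closure) subgroup on $A_{1,2},\ldots,A_{1,n}$ gives $d_1(\beta_0)=e$. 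Your caution about tracking the indices $1\cdot\beta=n$ and $n\cdot\gamma^{-1}=1$, rather than treating $d_1$ as a homomorphism on $B_n$, is exactly the right point to be careful about.
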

In general, the set of $({\bf n_1},{\bf n_2}, \ldots, {\bf n_k})$--
Cohen braids is not a subgroup of $B_n(M)$. Indeed, in $B_3$ the braids $\sigma_1$ and $\sigma_2 \sigma_1$ are
$\{1, 2 \}$--Cohen braids but the element $\sigma_1^{-1} \sigma_2^{-1}$ that is inverse to $\sigma_2 \sigma_1$
is not $\{1, 2 \}$--Cohen. However, if we consider 
$({\bf n_1},{\bf n_2}, \ldots, {\bf n_k})$--Cohen braids in the pure braid group
then there this set forms a subgroup. Denote it by
$\mathfrak{H}_{({\bf n_1},{\bf n_2},\ldots, {\bf n_k})}(M).$

\medskip
{\bf Question}. Find a generating set for $\mathfrak{H}_{({\bf n_1},{\bf n_2}, \ldots, {\bf n_k})}(M).$
 
\section{Some more open questions}

1. Let $\alpha$ be an $n$-component Brunnian link, then all finite type invariants of order
$\leq n-1$ of $\alpha$ vanish. In the case when $\alpha$
is the plat-closure of a pure braid this was shown by Kalfagianni and Lin \cite{KL} . What can we say about other invariants of $\alpha$? In particular, about the group
$\pi_1(S^3 \setminus \alpha),$ about the Alexander polynomial of $\alpha,$ about the Jones polynomial of $\alpha$?
\smallskip

2. Link groups  are residually finite \cite[Theorem 6.3.9]{K}. The question of residual nilpotency
of link groups is more complicated. In particular, the group of any non-trivial
knot is not residually nilpotent,
but the groups of some links are. It was proved in \cite{BM}  that the group of Whitehed link and group of Borromean
link are residually nilpotent, on the other side there exists a 2-component 2-bridge Brunnian link whose group
is not residually nilpotent. So, the question is:

\noindent
 What groups of Brunnian links are residually (torsion free) nilpotent?
\smallskip 

3. The existence of the following exact sequence
\begin{equation*}
1\to\Brun_n(M)\to \mathfrak{H}_n(M)\buildrel d_1\over\longrightarrow \mathfrak{H}_{n-1}(M)\to 1
\end{equation*}
was proved in Corollary~\ref{corollary6.5}.
Is there a cross-section $s : \mathfrak{H}_{n-1}(M) \longrightarrow \mathfrak{H}_n(M),$ (in what cases?) i.e. such an embedding of
$\mathfrak{H}_{n-1}(M)$ into $\mathfrak{H}_{n}(M)$ that
$\mathfrak{H}_{n}(M) = {\rm Brun}_n(M) \leftthreetimes \mathfrak{H}_{n-1}(M)$?
\smallskip

4. Garside proved that any braid $\beta \in B_n$ has a normal form
$\beta = \Delta_n^k \beta_0,$ where $k$ is an integer and $\beta_0 \in B_n^+$ is a positive braid.

\smallskip
{\bf Question.} What positive braids are Cohen?

\smallskip
Braids $\Delta_n^k$ lie in the intersection $B_n^+ \cap \mathfrak{H}_n^B$ for all $k > 0$. Also, in this
intersection lie the braids
$$
A_{12}^k (A_{13}^k A_{23}^k) \ldots (A_{1n}^k A_{2n}^k \ldots A_{n-1,n}^k),~~~k > 0.
$$
On the other side, if $\beta \in \mathfrak{H}_n^B$ then we can find its Garside normal form
$\beta = \Delta_n^k \beta_0$ and $\beta_0 \in B_n^+ \cap \mathfrak{H}_n^B.$ If we know
 that a positive
braid $\beta_0$ is Cohen braid then any braid of the form $\Delta_n^k \beta_0$ is also
 Cohen for any integer $k$.
\section{Acknowledgments}
The last author is partially supported by the Singapore Ministry
of Education research grant (AcRF Tier 2 WBS No. R-146-000-143-112)
and a grant (No. 11028104) of NSFC of China.

\end{document}